\documentclass{article}

\usepackage[utf8]{inputenc}
\usepackage{authblk}
\usepackage{hyperref}
\usepackage{enumitem}

\usepackage[a4paper, total={5in, 9in}]{geometry}

\usepackage{array}

\usepackage{tikz}
\usepackage{tikz-cd}
\usetikzlibrary{arrows.meta}
\tikzcdset{arrow style=tikz,
diagrams={>={Stealth[round,length=4pt,width=6pt,inset=3pt]}}}

\usepackage{amsthm}

\usepackage{amsmath}
\usepackage{amssymb}
\usepackage{amsfonts}
\usepackage{faktor}
\usepackage[T1]{fontenc}
\usepackage{xfrac}
\usepackage{dsfont}
\usepackage{color}
\usepackage{todonotes}
\usepackage[english]{babel}
\usepackage{mathtools}

\newtheorem{theorem}{Theorem}
\newtheorem{definition}[theorem]{Definition}

\newtheorem{lemma}[theorem]{Lemma}

\newtheorem{proposition}[theorem]{Proposition}

\newtheorem{example}[theorem]{Example}
\newtheorem{remark}[theorem]{Remark}
\newtheorem{corollary}[theorem]{Corollary}

\newtheorem*{theorem*}{Theorem}

\DeclareMathOperator{\ord}{ord}

\DeclareMathOperator{\Jac}{Jac}
\DeclareMathOperator{\fpt}{fpt}
\DeclareMathOperator{\lct}{lct}

\newcommand{\kx}{k[[\underline{x}]]}
\newcommand{\equivr}{\overset{\mathcal{R}}{\sim}}
\newcommand{\equivk}{\overset{\mathcal{K}}{\sim}}

\numberwithin{theorem}{section}

\usepackage{culmus}

\title{Bounds on the $F$-Pure Threshold of Isolated Hypersurface Singularities}
\author{Yotam Svoray}
\date{}

\AtEndDocument{\bigskip{\footnotesize%
  \textsc{Department of Mathematics, University of Utah, UT 84112} \par
  \textit{E-mail address}: \texttt{svoray@math.utah.edu}
}}

\begin{document}

\maketitle

\begin{abstract}
In this note, we obtain bounds for the $F$-pure threshold of isolated hypersurface singularities over an algebraically closed field of positive characteristic in terms of classical singularity invariants, notably the Milnor and Tjurina numbers. We also compare the $F$-pure threshold of a power series with that of its weighted initial form. For irreducible plane curves, this gives bounds in terms of the first two generators of the value semigroup, together with an equality criterion and an example showing that higher weighted-order terms can change the $F$-pure threshold. As applications, we derive bounds on the log canonical threshold and the Brian{\c{c}}on--Skoda exponent of complex isolated hypersurface singularities.
\end{abstract}

\section{Introduction}

The goal of this note is to use techniques from the study of isolated hypersurface singularities to bound and analyze the $F$-pure threshold (see Definition~\ref{def:Fpt}) of such singularities. Specifically, this note has three main results. First, we show how the Milnor and Tjurina numbers (see Definition~\ref{def:iso_sing}) of an isolated hypersurface singularity can be used to bound its $F$-pure threshold:

\begin{theorem}\label{ThmA}
Let $f\in\mathfrak{m}^2\subset k[[\underline{x}]]$ and assume that $\tau(f)<\infty$.
\begin{enumerate}
    \item\label{thm:fpt_milnor} If $\mu(f)<\infty$, then $\fpt(f)\geq\frac{1}{\tau(f)}-\frac{1}{\mu(f)}$.
    \item\label{cor:mil_infinite} If $\mu(f)=\infty$, then $\fpt(f)\geq\frac{1}{\tau(f)}$.
\end{enumerate}
\end{theorem}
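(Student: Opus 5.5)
The plan is to argue through the definition of the $F$-pure threshold via the numbers $\nu_f(p^e)=\max\{r : f^r\notin\mathfrak m^{[p^e]}\}$, with $\fpt(f)=\lim_e \nu_f(p^e)/p^e$. I would combine two inputs: differential operators of order $<q=p^e$ (Hasse derivatives) preserve $\mathfrak m^{[q]}$, and finiteness of $\tau(f)$ gives the containment $\mathfrak m^{\tau(f)}\subseteq (f)+J(f)$. The latter holds because $\kx/(f,J(f))$ is a local Artinian algebra of length $\tau(f)$, so its maximal ideal is nilpotent of index at most $\tau(f)$.

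Fix $q=p^e$ and suppose $f^r\in\mathfrak m^{[q]}$. The first step is to apply the derivations $\partial_i$, and products of them, to $f^r$. Since these operators keep us inside $\mathfrak m^{[q]}$, I expect to obtain $f^{r-j}\,(f,J(f))^{j}\subseteq \mathfrak m^{[q]}$ for a suitable range of $j$, up to an error. This is the step I expect to be the main obstacle, for two reasons. First, the binomial coefficients $r(r-1)\cdots$ can vanish mod $p$; I would handle this by choosing $r$ with controlled $p$-adic digits, for instance $r$ just above $\nu_f(q)$, and using Hasse derivatives $\partial^{(a)}$ rather than iterated $\partial_i$. Second, higher derivatives of $f^r$ also produce terms involving second and higher derivatives of $f$, not just products of first partials. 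I would control these error terms by working modulo a deeper power of $f$. Here the Milnor number enters.

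If $\mu(f)<\infty$, multiplication by $f$ is a nilpotent endomorphism of the $\mu$-dimensional algebra $\kx/J(f)$. Its cokernel has dimension $\tau(f)$, so it has exactly $\tau(f)$ Jordan blocks, and hence $f^{\mu-\tau+1}\in J(f)$. I would use this relation to absorb the excess powers of $f$, at the cost of a factor that yields the correction $-\frac{1}{\mu(f)}$. The counting should give $\mathfrak m^{N}\subseteq \mathfrak m^{[q]}$ with $N\approx \tau(f)\,r\,(1+O(\tfrac{\tau}{\mu}))$. On the other hand, $x_1^{q-1}\cdots x_n^{q-1}\notin\mathfrak m^{[q]}$ forces $N>n(q-1)\ge q-1$. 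Rearranging this gives $\nu_f(q)/q\ge \frac{1}{\tau}-\frac{1}{\mu}-O(1/q)$, and letting $e\to\infty$ yields part~\ref{thm:fpt_milnor}.

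For part~\ref{cor:mil_infinite}, when $\mu(f)=\infty$ there is no nilpotency relation to exploit. However, the loss term coming from the $f$-torsion of the Jacobian algebra disappears in the limit. Formally, I would either rerun the same count, now using only $\mathfrak m^{\tau}\subseteq(f)+J(f)$, or deduce the statement from part~\ref{thm:fpt_milnor} by a perturbation. In the latter approach, I would replace $f$ by $f+\lambda g$ with $g$ in a high power of $\mathfrak m$, chosen so that $\mu$ becomes finite and arbitrarily large while $\tau$ is unchanged. This uses finite determinacy, which follows from $\tau(f)<\infty$, so that $f+\lambda g$ is contact equivalent to $f$ and in particular has the same $\fpt$. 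Letting $\mu\to\infty$ then gives $\fpt(f)\ge 1/\tau(f)$.
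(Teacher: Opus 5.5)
\textbf{There is a genuine gap in Part~1}, at exactly the step you flag as the main obstacle. You never establish the containment $f^{r-j}(\langle f\rangle+\Jac(f))^{j}\subseteq\mathfrak m^{[q]}$. A containment $\mathfrak m^{N}\subseteq\mathfrak m^{[q]}$ with no factor of $f$ left requires $j=r$, and in that range the containment is false. Take $f=x_1^2+x_2^3$ with $p\equiv 1\pmod 6$, $q=p$, and $r=\nu_1(f)+1=\frac{5p+1}{6}$. Then $f^{r}\in\mathfrak m^{[p]}$ and $\langle f\rangle+\Jac(f)=\langle x_1,x_2^2\rangle$. Yet $x_1^{(p+2)/3}x_2^{p-1}\in\langle x_1,x_2^2\rangle^{r}\setminus\mathfrak m^{[p]}$.

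The terms of $\partial^{(a)}(f^{r})$ involving second and higher derivatives of $f$ are exactly what your sketch leaves uncontrolled. Already at order two, the term $rf^{r-1}\partial_i\partial_jf$ is not handled by the order-one containment, because in general $\partial_i\partial_jf\notin\langle f\rangle+\Jac(f)$. Nothing in the sketch explains how $f^{\mu-\tau+1}\in\Jac(f)$ would remove these terms. So the estimate $N\approx\tau r(1+O(\tau/\mu))$ is unsupported. Even granting an estimate of that shape, an unspecified $O(\cdot)$ cannot yield the exact bound $\frac1\tau-\frac1\mu$.

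\textbf{The missing idea} is that no differential operators are needed; the paper compares Frobenius powers of $\mathfrak m$ with those of $\Jac(f)$.
\begin{enumerate}
\item Since $\mathfrak m^{\mu}\subseteq\Jac(f)$, we have $(\mathfrak m^{[q]})^{\mu}\subseteq\Jac(f)^{[q]}$. So $f^{\nu_e(f)+1}\in\mathfrak m^{[q]}$ gives $f^{\mu(\nu_e(f)+1)}\in\Jac(f)^{[q]}$, that is, $\mu\fpt(f)\ge c(f,\Jac(f))$.
\item Flatness of Frobenius gives $p\,\nu_e(f,\Jac(f))\le\nu_{e+1}(f,\Jac(f))$, hence $c(f,\Jac(f))\ge\nu_0(f,\Jac(f))$.
\item Filter $\kx/\Jac(f)$ by multiples of $f^{i}$. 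Each successive quotient is a quotient of $\kx/(\Jac(f)+\langle f\rangle)$, so $\mu\le(\nu_0(f,\Jac(f))+1)\tau$.
\end{enumerate}
Combining these gives $\mu\fpt(f)\ge\frac{\mu}{\tau}-1$. The Jordan-block information that matters is therefore the \emph{lower} bound $\mu/\tau$ on the nilpotency index of $f$ on the Jacobian algebra, not your upper bound $\mu-\tau+1$.

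\textbf{Part~2.} Your perturbation route is the paper's, but it inherits the gap in Part~1. You also do not justify that a perturbation with finite Milnor number exists in characteristic $p$. The paper proves this by choosing coordinates in which $\langle\partial_2f,\dots,\partial_nf\rangle$ has height $n-1$. It then shows that $\partial_1f+lx_1^{l-1}$ avoids the finitely many minimal primes of this ideal for all large $l$ with $p\nmid l$. After that, $\mu(f+x_1^{l})\to\infty$ is easy, since $\mu(f+g)\ge\dim_k\kx/(\Jac(f)+\mathfrak m^{M})$ for $g\in\mathfrak m^{M+1}$.
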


Second, we show how weighted initial forms can be used to bound the $F$-pure threshold. In arbitrary dimension, if $g=\operatorname{in}_w(f)$ has weighted degree $d$ with respect to positive weights $w_1,\dots,w_n$, then $\fpt(g)\leq\fpt(f)\leq\min\{1,\frac{w_1+\cdots+w_n}{d}\}$. For an irreducible plane curve, this gives the following result in terms of the first two elements in the minimal generating sequence of its semigroup of values (see Definition~\ref{def:semigroup} and Lemma~\ref{lem:semi_prope}). The upper bound is inspired by the analogous formula for the log canonical threshold proved by Igusa in~\cite{igusa1977first} (see Remark~\ref{rem:igusa_lct}):

\begin{theorem}\label{thm:semigr_fpt}
Let $f\in\mathfrak{m}^2\subset k[[x,y]]$ be irreducible, and let $\beta_0,\beta_1$ be the first two elements of the minimal generating sequence of $\Gamma(f)$. Then:
\begin{enumerate}
    \item\label{cor:beta_bound} $\fpt(f)\leq\frac{1}{\beta_0}+\frac{1}{\beta_1}$.
    \item\label{cor:two_generators} If $\Gamma(f)=\langle\beta_0,\beta_1\rangle$, then
    \[
        \fpt(x^{\beta_0}+y^{\beta_1})\leq\fpt(f)\leq
        \frac{1}{\beta_0}+\frac{1}{\beta_1}.
    \]
    In particular, if $\fpt(x^{\beta_0}+y^{\beta_1})=\frac{1}{\beta_0}+\frac{1}{\beta_1}$, then $\fpt(f)=\fpt(x^{\beta_0}+y^{\beta_1})$.
\end{enumerate}
\end{theorem}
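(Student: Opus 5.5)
The plan is to reduce both parts to the general weighted initial form comparison stated just before the theorem: if $g=\operatorname{in}_w(f)$ has weighted degree $d$ for positive weights $w_1,\dots,w_n$, then $\fpt(g)\leq\fpt(f)\leq\min\{1,\frac{w_1+\cdots+w_n}{d}\}$. For an irreducible plane curve we choose coordinates and weights adapted to the first characteristic exponent, so that the weighted initial form of $f$ is a power of a quasi-homogeneous binomial.

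\textbf{Step 1: normal form.} Since $k$ is algebraically closed and $f$ is irreducible, $f$ has a single tangent line. After a linear change of coordinates (which changes neither $\fpt$ nor $\Gamma(f)$) we may take the tangent cone to be $y^{\beta_0}$, where $\beta_0=\ord(f)$ is the multiplicity.

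\textbf{Step 2: weighted initial form.} Use the standard description of the generators of $\Gamma(f)$ (Lemma~\ref{lem:semi_prope}). After a coordinate change $y\mapsto y-\phi(x)$ removing the non-characteristic terms, the Newton polygon of $f$ has a single edge. With the weights $w=(\beta_0,\beta_1)$ on $(y,x)$, the weighted initial form is
\[
(y^{a}-c\,x^{b})^{e}, \qquad c\neq 0,\quad \gcd(a,b)=1,
\]
of weighted degree $\beta_0\beta_1$. Here $e=\gcd(\beta_0,\beta_1)$, $a=\beta_0/e$ and $b=\beta_1/e$.

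\textbf{Step 3: part~\ref{cor:beta_bound}.} The upper bound above then gives
\[
\fpt(f)\leq\frac{\beta_0+\beta_1}{\beta_0\beta_1}=\frac{1}{\beta_0}+\frac{1}{\beta_1}.
\]

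\textbf{Step 4: part~\ref{cor:two_generators}.} If $\Gamma(f)=\langle\beta_0,\beta_1\rangle$, then the generators are coprime, so $e=1$ and $\operatorname{in}_w(f)=y^{\beta_0}-cx^{\beta_1}$. Rescaling the variables by roots of $c$ (possible since $k$ is algebraically closed) and swapping the roles of $x$ and $y$, this is $x^{\beta_0}+y^{\beta_1}$ up to coordinates. The lower bound $\fpt(g)\leq\fpt(f)$ then gives
\[
\fpt(x^{\beta_0}+y^{\beta_1})\leq\fpt(f)\leq\frac{1}{\beta_0}+\frac{1}{\beta_1}.
\]
The equality statement is immediate, since the two bounds coincide under that hypothesis.

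\textbf{Main obstacle.} The hardest step is Step 2 in positive characteristic, namely justifying that the weighted initial form is the stated binomial power. Puiseux expansions can fail in characteristic $p$, so I would instead argue through the semigroup and intersection multiplicities. The value $\beta_1$ is the intersection multiplicity of $f$ with a maximal-contact curve $y-\phi(x)$, and the Newton polygon of $f$ with respect to such coordinates is a single segment from $(0,\beta_0)$ to $(\beta_1,0)$. Irreducibility forces the edge polynomial to be a power of a single irreducible factor, which over an algebraically closed field has the form $y^{a}-cx^{b}$. I expect this to be essentially the content of Lemma~\ref{lem:semi_prope} and Definition~\ref{def:semigroup}, whose characteristic-free versions (via Abhyankar–Moh or Campillo) I would invoke.
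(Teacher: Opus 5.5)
Your proposal is correct and follows the paper's proof: Step 2 is exactly the single-edge normal form of item 7 of Lemma~\ref{lem:semi_prope} fed into Proposition~\ref{prop:weighted_fpt}. For Part 2 the paper simply cites item 6 to get the binomial initial form, and your derivation from $\gcd(\beta_0,\beta_1)=1$ also works; in fact the only lattice points on the edge are then its endpoints, so you do not even need the irreducibility argument for the edge polynomial. One slip to fix: the weights must be $\beta_1$ on $y$ and $\beta_0$ on $x$, not the reverse, so that $y^{\beta_0}$ and $x^{\beta_1}$ both have weight $\beta_0\beta_1$.
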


Third, we use these results to obtain analogous bounds for the log canonical threshold (see Definition~\ref{def:lct}) of a power series with integer coefficients. We also obtain a bound on the Brian{\c{c}}on--Skoda exponent (see Definition~\ref{def:eBS}) of such a power series:

\begin{corollary}\label{corB}
Let $f\in\mathfrak{m}^2\subset\mathbb{Z}[[\underline{x}]]$, viewed also as an element of $\mathbb{C}[[\underline{x}]]$, and assume that $\mu(f)<\infty$. Then:
\begin{enumerate}
    \item\label{cor:lct_milnor} $\lct(f)\geq\frac{1}{\tau(f)}-\frac{1}{\mu(f)}$.
    \item\label{cor:ebs} $\lct(f)\cdot\mu(f)+1\geq e^{\textup{BS}}(f)$.
\end{enumerate}
\end{corollary}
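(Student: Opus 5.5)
The plan is to deduce both parts from Theorem~\ref{ThmA} by reduction modulo $p$, using the classical comparison between $F$-pure thresholds of reductions and the log canonical threshold. For $f\in\mathbb{Z}[[\underline{x}]]$ with $\mu_{\mathbb{C}}(f)<\infty$, the singularity is isolated. By finite determinacy, $f$ is right-equivalent to a polynomial truncation of high degree, which is still in $\mathbb{Z}[\underline{x}]$, so we may assume $f$ is a polynomial. Let $f_p$ denote its reduction in $\overline{\mathbb{F}}_p[[\underline{x}]]$. The key facts are:
\begin{enumerate}
    \item[(a)] For all $p\gg0$ we have $\mu(f_p)=\mu(f)$ and $\tau(f_p)=\tau(f)$. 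These are the $\mathbb{Q}$-dimensions of $\mathbb{Q}[[\underline{x}]]/\Jac(f)$ and $\mathbb{Q}[[\underline{x}]]/(f,\Jac(f))$. One computes them by a standard basis over $\mathbb{Z}[1/N]$, which specializes for $p\nmid N$, together with the fact that finite colength is preserved under generic reduction.
    \item[(b)] $\fpt(f_p)\le\lct(f)$ for all $p\gg0$, and $\lim_{p\to\infty}\fpt(f_p)=\lct(f)$. These are the results of Hara--Watanabe, Hara--Yoshida and Musta\c{t}\u{a}--Takagi--Watanabe, stated for polynomials; we pass to the completion at the origin via the local nature of both thresholds.
\end{enumerate}

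For part~\ref{cor:lct_milnor}, take $p$ large so that (a) holds and $\mu(f_p)<\infty$. Theorem~\ref{ThmA}\eqref{thm:fpt_milnor} then gives $\fpt(f_p)\ge \frac{1}{\tau(f)}-\frac{1}{\mu(f)}$. Letting $p\to\infty$, or simply using $\lct(f)\ge\fpt(f_p)$, yields the claim.

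For part~\ref{cor:ebs}, I would recall Definition~\ref{def:eBS}: $e^{\mathrm{BS}}(f)$ is the least $e$ such that $f^{e}\in\Jac(f)$ in the local ring. I would combine two ingredients. The first is the Brian\c{c}on--Skoda type bound $f^{\lfloor \lct(f)\cdot\mu(f)\rfloor+1}\in \Jac(f)$. The second is the integrality of the exponent, giving $e^{\mathrm{BS}}\le \lfloor \lct\cdot\mu\rfloor+1\le \lct\cdot\mu+1$. A concrete route to the first ingredient: the multiplication by $f$ on the Milnor algebra $M=\mathbb{C}[[\underline{x}]]/\Jac(f)$, which has dimension $\mu$, is nilpotent. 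Its nilpotency order is controlled by the spectrum, i.e.\ by the eigenvalues of monodromy. Since $\lct(f)=\min(1,\alpha_1)$, where $\alpha_1$ is the minimal spectral number, and the spectrum lies in $(0,n)$ and is symmetric, one gets a bound on the nilpotency of $f$ acting on $M$.

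An alternative characteristic-$p$ route, more in the spirit of the paper, is to show $f_p^{\,e}\in\Jac(f_p)$ using test ideals. Here $\tau(f_p^{c})$ is trivial for $c<\fpt(f_p)$, and one would then pass to the limit.

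The main obstacle is part~\ref{cor:ebs}: getting the exact constant $\lct\cdot\mu+1$. I would first try the elementary argument. The ideal $(f)$ is nilpotent in $M$, and a length filtration gives $f^{\mu}\in\Jac(f)$. So $e^{\mathrm{BS}}\le\mu$, but this is too weak when $\lct<1$. To sharpen it, I would use the Skoda-type statement that $f^{m}\in\overline{\Jac(f)^{m-\lct\cdot\mu}}$-style multiplier ideal inclusions, $\mathcal{J}(f^{c})\subseteq\ldots$, and try to show $f^{k}\in\Jac(f)$ as soon as $k>\lct(f)\mu(f)$. I expect this step to require an earlier result of the paper relating $\fpt$ and membership of powers of $f$ in $\Jac(f)$, transported via (b).
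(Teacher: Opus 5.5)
Your Part~1 is the paper's argument: truncate to a polynomial, use $\mu(f_p)=\mu(f)$ and $\tau(f_p)=\tau(f)$ for $p\gg0$, apply Theorem~\ref{ThmA} to $f_p$, and finish with $\lct(f)\geq\fpt(f_p)$.

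Part~2, however, is not proved. Since $e^{\textup{BS}}(f)$ is an integer, rewriting the goal as $f^{\lfloor\lct(f)\mu(f)\rfloor+1}\in\Jac(f)$ is only a restatement, and neither of your routes establishes it. The spectral route would need two things you do not supply: a precise bound on the nilpotency order of multiplication by $f$ on the Milnor algebra in terms of spectral numbers, and an inequality comparing that bound with $\lct(f)\mu(f)$. The test-ideal remark does not help: triviality of $\tau(f_p^{c})$ for $c<\fpt(f_p)$ says nothing about whether powers of $f_p$ lie in $\Jac(f_p)$.

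The missing idea is that your ``elementary'' argument ($\mathfrak m^{\mu}\subset\Jac(f)$, hence $f^{\mu}\in\Jac(f)$) becomes sharp when run at Frobenius level $q=p^e$. Put $\mu=\mu(f_p)$ and $J=\Jac(f_p)$.
\begin{itemize}
\item From $\mathfrak m^{\mu}\subset J$ you get $(\mathfrak m^{[q]})^{\mu}\subset J^{[q]}$.
\item Since $f_p^{\nu_e(f_p)+1}\in\mathfrak m^{[q]}$, it follows that $f_p^{\mu(\nu_e(f_p)+1)}\in J^{[q]}$.
\item By flatness of Frobenius, $f_p^{N}\notin J$ implies $f_p^{qN}\notin J^{[q]}$. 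Taking $N=\nu_0(f_p,J)=e^{\textup{BS}}(f_p)-1$ gives $qN<\mu(\nu_e(f_p)+1)$.
\item Dividing by $q$ and letting $e\to\infty$ yields $e^{\textup{BS}}(f_p)-1\leq\mu\,\fpt(f_p)$.
\end{itemize}
This is Lemma~\ref{lem:c(f,Jac)_fpt} combined with Lemma~\ref{lem:bound_c_nu}, i.e.\ Corollary~\ref{cor:BS_F}. The gain over $f^{\mu}\in\Jac(f)$ comes from replacing $\mathfrak m$ by $\mathfrak m^{[q]}$, whose distance from $f$ is measured exactly by $\fpt$.

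You also need a comparison you never mention: $e^{\textup{BS}}(f)\leq e^{\textup{BS}}(f_p)$ for $p\gg0$, that is, $f_p^{\,e^{\textup{BS}}(f)-1}\notin\Jac(f_p)$. Your fact (b) concerns thresholds, not membership of powers of $f$ in Jacobian ideals, so it cannot supply this. It follows from a separate spreading-out argument: the finite-length Milnor algebra and the class of $f$ are defined over some $\mathbb{Z}[1/c]$, and by generic freeness the nonvanishing of $f^{e^{\textup{BS}}(f)-1}$ survives reduction modulo almost all $p$. This is Lemma~\ref{lem:nu_E_BS}. With both pieces in hand, for $p\gg0$,
\[
e^{\textup{BS}}(f)-1\leq e^{\textup{BS}}(f_p)-1\leq\mu(f)\,\fpt(f_p)\leq\mu(f)\,\lct(f).
\]
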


\noindent\textbf{Acknowledgments.} This work was done as part of the author's PhD thesis under the guidance of Karl Schwede, and we wish to thank him for his guidance, help, and support. We wish to thank Patricio Almir\'{o}n, Benjamin Baily, Gari Chua, Ilya Smirnov, and Jack Jeffries for their input on the ideas presented in this paper. In addition, we thank Gari Chua for his input and for reviewing several drafts of this note. The author was partially supported by NSF grant DMS-2101800.

\section{$F$-Pure Threshold of Isolated Singularities}\label{chap:F-stuff}

In this section, we show how the $F$-pure threshold of an isolated singularity can be related to its Milnor and Tjurina numbers. Several previous results compute and study the $F$-pure threshold of isolated singularities, especially for homogeneous and quasi-homogeneous polynomials with isolated singularities, as in~\cite{hernandez2016f,muller2018f,canton2016behavior,kadyrsizova2022lower}. For background on $F$-pure thresholds, we refer to~\cite{schwede2024singularities}.\\

Let $k$ be an algebraically closed field of characteristic $p>0$ and set $k[[\underline{x}]]=k[[x_1,\dots,x_n]]$, the ring of power series over $k$ in the variables $x_1,\dots,x_n$, with maximal ideal $\mathfrak{m}=\langle x_1,\dots,x_n\rangle$. We use multi-index notation when discussing power series.\\

We first recall the definitions and properties of isolated singularities used throughout this section:

\begin{definition}\label{def:iso_sing}
Let $f,g\in\mathfrak{m}\subset\kx$.
\begin{enumerate}
    \item \textbf{The Jacobian ideal} of $f$ is defined to be $\Jac(f)=\langle\partial_1(f),\dots,\partial_n(f)\rangle$, the ideal generated by the formal partial derivatives of $f$.
    \item The \textbf{Milnor} and \textbf{Tjurina} numbers of $f$ are, respectively,
    \[
        \mu(f)=\dim_k\!\left(\frac{\kx}{\Jac(f)}\right),\qquad
        \tau(f)=\dim_k\!\left(\frac{\kx}{\Jac(f)+\langle f\rangle}\right).
    \]
    \item We say that $f$ and $g$ are \textbf{right} (resp.\ \textbf{contact}) equivalent and denote $f\equivr g$ (resp.\ $f\equivk g$) if there exists a $k$-automorphism $\varphi$ of $\kx$ such that $\varphi(f)=g$ (resp.\ $\langle\varphi(f)\rangle=\langle g\rangle$).
    \item Given an ideal $I\subset\kx$ and $0\neq f\in\kx$, we denote by $\ord_I(f)$ the largest $N$ such that $f\in I^N$, and we write $\ord_{\mathfrak{m}}(f)=\ord(f)$. We set $\ord_I(0)=\infty$.
\end{enumerate}
\end{definition}

In the following proposition we summarize some known results about isolated singularities that we use throughout this text:

\begin{proposition}\label{prop:mt_basic}
Let $f\in\mathfrak{m}\subset\kx$. Then:
\begin{enumerate}
    \item If $\tau(f)<\infty$, then $\mu(f)<\infty$ if and only if $f\in\sqrt{\Jac(f)}$. In addition, if $f^N\in\Jac(f)$, then $\tau(f)\leq\mu(f)\leq N\cdot\tau(f)$.
    \item (Detecting Smoothness) $\mu(f)=0$ if and only if $\tau(f)=0$ if and only if $f\equivr x_1$.
    \item (Stability under $\equivk$) Given $g\in\kx$, if $f\equivk g$, then $\tau(f)=\tau(g)$.
    \item (Stability under $\equivr$) Given $g\in\kx$, if $f\equivr g$, then $\mu(f)=\mu(g)$.
    \item (Contact Determinacy) $\tau(f)<\infty$ if and only if there exists $N>0$ such that for every $g\in\kx$ with $g-f\in\mathfrak{m}^{N+1}$ we have $f\equivk g$. In this case we can choose $N=2\tau(f)-\ord(f)+2$.
    \item (Right Determinacy) $\mu(f)<\infty$ if and only if there exists $N>0$ such that for every $g\in\kx$ with $g-f\in\mathfrak{m}^{N+1}$ we have $f\equivr g$. In this case we can choose $N=2\mu(f)-\ord(f)+2$.
    \item If $\mu(f)<\infty$ (resp.\ $\tau(f)<\infty$), then $\mathfrak{m}^{\mu(f)}\subset\Jac(f)$ (resp.\ $\mathfrak{m}^{\tau(f)}\subset\Jac(f)+\langle f\rangle$).
\end{enumerate}
\end{proposition}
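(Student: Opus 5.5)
This proposition collects standard facts. Rather than building new machinery, I would prove each item by reducing it to classical results on finite determinacy and on the structure of the Jacobian algebra. These results remain valid in positive characteristic as long as one works with the formal partials and with $\tau$ and $\mu$ as defined in Definition~\ref{def:iso_sing}.

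\textbf{Item (1).} Assume $\tau(f)<\infty$, so that $\Jac(f)+\langle f\rangle$ is $\mathfrak{m}$-primary.
\begin{itemize}
\item If $f\in\sqrt{\Jac(f)}$, then $\sqrt{\Jac(f)}\supseteq\sqrt{\Jac(f)+\langle f\rangle}=\mathfrak{m}$. Hence $\Jac(f)$ is $\mathfrak{m}$-primary and $\mu(f)<\infty$.
\item Conversely, if $\mu(f)<\infty$, then $\Jac(f)$ is $\mathfrak{m}$-primary and $f\in\mathfrak{m}=\sqrt{\Jac(f)}$.
\item For the inequality $\tau(f)\le\mu(f)$, use the surjection $\kx/\Jac(f)\to\kx/(\Jac(f)+\langle f\rangle)$.
\item For $\mu(f)\le N\tau(f)$, set $J=\Jac(f)$ and filter $\kx/J$ by the chain $J+\langle f^i\rangle$ for $i=0,\dots,N$, with $f^N\in J$. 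Each graded piece $(J+\langle f^{i}\rangle)/(J+\langle f^{i+1}\rangle)$ is a cyclic module generated by $f^i$. It is annihilated by $J+\langle f\rangle$, so it is a quotient of $\kx/(J+\langle f\rangle)$ and has length at most $\tau(f)$. Summing the $N$ pieces gives the bound.
\end{itemize}

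\textbf{Item (2).} If $f\notin\mathfrak{m}^2$, some $\partial_i f$ is a unit, so $\mu(f)=\tau(f)=0$, and the formal inverse function theorem gives $f\equivr x_1$. Conversely, if $f\in\mathfrak{m}^2$, then $\Jac(f)\subseteq\mathfrak{m}$ and $\Jac(f)+\langle f\rangle\subseteq\mathfrak{m}$, so both invariants are at least $1$.

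\textbf{Items (3) and (4).} These follow from the chain rule. For an automorphism $\varphi$, we have $\Jac(\varphi(f))=\varphi(\Jac(f))$, because the Jacobian matrix of $\varphi$ is invertible. This gives (4) immediately.

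For (3), suppose $g=u\,\varphi(f)$ with $u$ a unit. Then $\partial_i g\equiv u\,\partial_i(\varphi(f))$ modulo $\langle\varphi(f)\rangle$. Therefore $\Jac(g)+\langle g\rangle=\varphi(\Jac(f)+\langle f\rangle)$, and the quotient dimensions agree.

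\textbf{Item (7).} If $A=\kx/I$ is Artinian local of length $\ell$, then the chain $\mathfrak{m}^iA$ strictly decreases until it reaches zero, by Nakayama. Hence $\mathfrak{m}^\ell A=0$. Applying this with $I=\Jac(f)$ and with $I=\Jac(f)+\langle f\rangle$ gives (7).

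\textbf{Items (5) and (6): the main obstacle.} These are finite determinacy, and this is where the real work lies. I would cite the characteristic-free determinacy theorems of Greuel and Pham (\emph{Finite determinacy of matrices and ideals}, and the related work on mixed and positive characteristic), which give exactly the bounds $2\mu-\ord+2$ and $2\tau-\ord+2$.

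If I had to prove the sufficiency direction myself, I would follow the standard approach. Item (7) gives $\mathfrak{m}^{\mu}\subseteq\Jac(f)$. From this one derives $\mathfrak{m}^{N+1}\subseteq\mathfrak{m}^2\Jac(f)$ for the stated $N$. Then one runs the complete-ring iteration: solve for $\varphi$ successively modulo increasing powers of $\mathfrak{m}$ and pass to the limit by completeness. This replaces the homotopy method, which fails in characteristic $p$. For $\mathcal{K}$-equivalence one works instead with the tangent space $\mathfrak{m}\Jac(f)+\langle f\rangle$.

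For necessity, suppose $\mu(f)=\infty$. Then one perturbs $f$ by high powers $x_i^{M}$ chosen so that the resulting Milnor number is finite for every $M$, and invokes the invariance from (4) to get a contradiction. The analogous perturbation argument with (3) handles $\tau$.
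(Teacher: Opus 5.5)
Your proposal is correct, and it is organized differently from the paper. The paper's proof of Proposition~\ref{prop:mt_basic} consists entirely of citations. You instead prove items 1--4 and 7 from first principles:
\begin{itemize}
\item For $\mu(f)\le N\tau(f)$, you use the filtration of $\kx/\Jac(f)$ by $\Jac(f)+\langle f^i\rangle$, $0\le i\le N$. Its $N$ cyclic subquotients are quotients of $\kx/(\Jac(f)+\langle f\rangle)$.
\item For items 3 and 4, you use the chain rule: since the Jacobian matrix of $\varphi$ is invertible, $\Jac(\varphi(f))=\varphi(\Jac(f))$.
\item For item 7, you use Nakayama together with a length count.
\end{itemize}
All of these are valid in characteristic $p$ and show that these items are elementary; the paper's citations buy only brevity. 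For items 5 and 6 you, like the paper, rely on the literature, which is legitimate. The bounds $2\mu(f)-\ord(f)+2$ and $2\tau(f)-\ord(f)+2$ as stated are those of Boubakri--Greuel--Markwig, which is the source the paper uses; the Greuel--Pham results generalize them.

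Your optional self-contained sketch of 5--6 would need two repairs to stand on its own.

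\emph{Sufficiency.} Knowing only $\mathfrak m^{N+1}\subseteq\mathfrak m^2\Jac(f)$ is characteristic-zero-style input. On its own it does not make the characteristic-$p$ iteration close at level $N$. What is actually used is the stronger inclusion $\mathfrak m^{\mu+2}\subseteq\mathfrak m^2\Jac(f)$, with the following order count:
\begin{itemize}
\item Any $h\in\mathfrak m^{s}$ can be written $h=\sum_i a_i\partial_i f$ with $a_i\in\mathfrak m^{s-\mu}$.
\item The quadratic Hasse--Taylor remainder of $f(x-a)$ then has order at least $2(s-\mu)+\ord(f)-2$.
\item This exceeds $s$ exactly when $s\ge 2\mu-\ord(f)+3$, which is where $N=2\mu(f)-\ord(f)+2$ comes from.
\end{itemize}

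\emph{Necessity.} The perturbation cannot work ``for every $M$'': if $p\mid M$, then $\Jac(f+\sum_i x_i^M)=\Jac(f)$. Even for $p\nmid M$, finiteness of $\mu(f+\sum_i x_i^M)$ is a genuine claim that needs proof, for instance a genericity argument with coefficients $c_i x_i^M$. Compare Lemma~\ref{lem:milnor_limit}: there the paper needs $\tau(f)<\infty$ and a careful choice of coordinates just to get $\mu(f+x_1^l)<\infty$.

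Since you present 5--6 as citations, these issues do not affect the correctness of your proposal.
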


\begin{proof}
For the first item, see Lemma 4.21 in~\cite{svoray2025detecting}, which in turn generalizes results from~\cite{liu2018milnor}, and Proposition 2.1 in~\cite{hefez2019hypersurface}. For the second item, see Lemma 2.44 in Chapter I of~\cite{greuel2007introduction}. For the third and fourth items, see Lemma 1.2.7 in~\cite{boubakri2009hypersurface}. For the fifth and sixth items, see Corollary 2.4 and Theorem 2.8 in~\cite{boubakri2012invariants}; for the stated right-determinacy bound, note that their sharper bound $2\mu(f)-d(f)+1$, in terms of the differential order $d(f)$, is at most $2\mu(f)-\ord(f)+2$. For the seventh item, see Proposition 3.1.17 in~\cite{boubakri2009hypersurface}.
\end{proof}

\begin{remark}
\textup{With respect to the first item of Proposition~\ref{prop:mt_basic}, the quotient $\frac{\mu(f)}{\tau(f)}$ has been studied and bounded for singular power series with complex coefficients and finite Milnor number. For example, Liu in~\cite{liu2018milnor} showed that this quotient is bounded by $n$, the number of variables. In the case $n=2$, it was shown in~\cite{almiron2019note} that this quotient is bounded above by $\frac{4}{3}$, and for several families of surfaces with isolated singularities it was shown that this quotient is smaller than $\frac{3}{2}$. Finally, Ma and Zuo in~\cite{ma2026quotient} presented the optimal bound for this quotient in arbitrary dimension, with an analogous bound for a similar quotient in positive characteristic. For more information, see~\cite{almiron2022quotient,almiron20214,alberich2021minimal}. In addition, Saito in~\cite{saito1971quasihomogene} showed that for complex power series $\mu(f)-\tau(f)$ equals the $(n-1)$st Poincar\'e number, and therefore $\mu(f)=\tau(f)$ if and only if $f$ is right equivalent to a quasi-homogeneous polynomial.}
\end{remark}

\begin{lemma}\label{lem:jac_frob}
If $f\in\mathfrak{m}^{[p^e]}$, then $\Jac(f)\subset\mathfrak{m}^{[p^e]}$.
\end{lemma}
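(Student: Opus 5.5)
The plan is to write $f$ in terms of the Frobenius power generators and differentiate with the Leibniz rule, using that the derivative of a $p^e$-th power vanishes in characteristic $p$. Since $f\in\mathfrak{m}^{[p^e]}=\langle x_1^{p^e},\dots,x_n^{p^e}\rangle$, we write
\[
f=\sum_{j=1}^n g_j\,x_j^{p^e}
\]
with $g_j\in\kx$. This is legitimate because $\mathfrak{m}^{[p^e]}$ is a finitely generated ideal of $\kx$ and is generated by these monomials.

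Next we apply a partial derivative $\partial_i$. The formal derivations on $\kx$ satisfy the Leibniz rule, so
\[
\partial_i(f)=\sum_{j=1}^n\Bigl(\partial_i(g_j)\,x_j^{p^e}+g_j\,\partial_i\bigl(x_j^{p^e}\bigr)\Bigr).
\]
For $j\neq i$ we have $\partial_i(x_j^{p^e})=0$. For $j=i$ we have $\partial_i(x_i^{p^e})=p^e x_i^{p^e-1}=0$, since $p=0$ in $k$. Hence
\[
\partial_i(f)=\sum_j \partial_i(g_j)\,x_j^{p^e}\in\mathfrak{m}^{[p^e]}
\]
for every $i$. Because $\mathfrak{m}^{[p^e]}$ is an ideal containing all the generators $\partial_i(f)$, it contains $\Jac(f)$.

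There is no real obstacle here. The only point needing care is that the formal derivative of a power series is computed termwise, so the Leibniz rule holds for products of power series and not just polynomials. This follows from continuity in the $\mathfrak{m}$-adic topology, or directly from the coefficientwise definition.
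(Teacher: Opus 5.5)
Your proof is correct and is the paper's argument: write $f=\sum_j a_jx_j^{p^e}$, apply the Leibniz rule, and use $\partial_i(x_j^{p^e})=0$ to get $\partial_i(f)=\sum_j\partial_i(a_j)x_j^{p^e}\in\mathfrak{m}^{[p^e]}$. One caveat applies to both your proof and the paper's: the vanishing $p^e x_i^{p^e-1}=0$ requires $e\geq1$, and for $e=0$ the statement is false (take $f=x_1$, whose Jacobian ideal is the unit ideal).
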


\begin{proof}
If $f\in\mathfrak{m}^{[p^e]}$, then there exist $a_1,\dots,a_n\in\kx$ such that $f=\sum_{j=1}^n a_jx_j^{p^e}$. Therefore, for every $i$ we have $\partial_i(f)=\sum_{j=1}^n(\partial_i(a_j)x_j^{p^e}+a_j\partial_i(x_j^{p^e}))$. Since $k$ has characteristic $p$, $\partial_i(x_j^{p^e})=0$ for every $i,j$. Thus $\partial_i(f)=\sum_{j=1}^n\partial_i(a_j)x_j^{p^e}\in\mathfrak{m}^{[p^e]}$, and the result follows.
\end{proof}

We now define the $F$-pure threshold of a power series:

\begin{definition}\label{def:Fpt}
Given a proper ideal $J\subsetneq\kx$ and $f\in\sqrt J$, we denote by $\nu_e(f,J)$ the largest integer $N\geq0$ such that $f^N\notin J^{[p^e]}$, and we set $c(f,J)=\lim_{e\to\infty}\frac{\nu_e(f,J)}{p^e}$. If $J=\mathfrak{m}$, we write $\nu_e(f)=\nu_e(f,\mathfrak{m})$ and $\fpt(f)=c(f,\mathfrak{m})$, which we call the \textbf{$F$-pure threshold} of $f$.
\end{definition}

\begin{remark}\label{rem:fpt_contact}
\textup{Observe that $\fpt(f)$ is stable under contact, and therefore also under right, equivalence. Indeed, a $k$-automorphism $\varphi$ of $\kx$ is local, so $\varphi(\mathfrak{m}^{[p^e]})=\mathfrak{m}^{[p^e]}$, and multiplication by a unit does not affect membership in $\mathfrak{m}^{[p^e]}$.}
\end{remark}

\begin{example}
\textup{Assume that $p>2$ and let $f\in\kx$ be of order $2$. By the splitting lemma (see, for example, Theorem 2.1 in~\cite{greuel2025splitting}), we have $f\equivr x_1^2+\cdots+x_r^2+g(x_{r+1},\dots,x_n)$. If $r\geq2$, then by Fedder's criterion (see~\cite{fedder1983F} or Chapter 4.1 in~\cite{schwede2024singularities}) we have $\nu_e(f)=p^e-1$, and hence $\fpt(f)=1$.}
\end{example}

The following lemma is based on the proof of Lemma 3.3 in~\cite{schwede2024singularities} and the proof of Proposition 1.9 in~\cite{mustata2005f}:

\begin{lemma}\label{lem:bound_c_nu}
Given a proper ideal $I\subsetneq\kx$ and $f\in\sqrt I$, for every $e$ we have
\[
    p\nu_e(f,I)\leq\nu_{e+1}(f,I)\leq p(\nu_e(f,I)+1)-1.
\]
In particular, $\nu_0(f,I)\leq c(f,I)\leq\nu_0(f,I)+1$.
\end{lemma}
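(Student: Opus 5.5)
The plan is to prove the two inequalities separately, by direct manipulation of Frobenius powers. After that, I will iterate them to get the bound on $c(f,I)$. Throughout, write $\nu_e=\nu_e(f,I)$, which is finite because $f\in\sqrt I$.

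\emph{Lower bound $p\nu_e\leq\nu_{e+1}$.} Suppose, towards a contradiction, that $f^{p\nu_e}\in I^{[p^{e+1}]}=(I^{[p^e]})^{[p]}$. Since $\kx$ is regular, the Frobenius endomorphism $F$ is flat. Hence $F(a)\in J^{[p]}$ forces $a\in J$: indeed, $(J^{[p]}:F(a))=F(J:a)\kx$ by flatness, and $F(J:a)\kx$ is proper unless $(J:a)=\kx$. Applying this with $a=f^{\nu_e}$ and $J=I^{[p^e]}$ gives $f^{\nu_e}\in I^{[p^e]}$, which contradicts the definition of $\nu_e$. So $f^{p\nu_e}\notin I^{[p^{e+1}]}$, and therefore $\nu_{e+1}\geq p\nu_e$.

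\emph{Upper bound $\nu_{e+1}\leq p(\nu_e+1)-1$.} By maximality, $f^{\nu_e+1}\in I^{[p^e]}$. Taking $p$-th powers gives $f^{p(\nu_e+1)}\in (I^{[p^e]})^{[p]}=I^{[p^{e+1}]}$. Frobenius powers of ideals are ideals, so every higher power of $f$ also lies in $I^{[p^{e+1}]}$. Thus $\nu_{e+1}<p(\nu_e+1)$, which is the claimed inequality.

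\emph{The limit.} Dividing the two inequalities by $p^{e+1}$ gives
\[
\frac{\nu_e}{p^e}\leq\frac{\nu_{e+1}}{p^{e+1}},\qquad \frac{\nu_{e+1}+1}{p^{e+1}}\leq\frac{\nu_e+1}{p^e}.
\]
So $\nu_e/p^e$ is nondecreasing and $(\nu_e+1)/p^e$ is nonincreasing. The first sequence is bounded above by the second, so it converges, and its limit $c(f,I)$ lies between $\nu_0$ and $\nu_0+1$ (taking $e=0$ in both monotone sequences).

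The only step needing care is the flatness fact used for the lower bound, namely that $F(a)\in J^{[p]}$ implies $a\in J$ in the regular ring $\kx$. I would cite Kunz's theorem for this, or prove it directly, since $\kx$ is a free module over $k[[x^p]]$ with a basis of monomials.
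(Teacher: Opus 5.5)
Your proof is correct and follows essentially the same route as the paper. Like the paper, you use flatness of Frobenius on the regular ring $\kx$ for the lower bound, raise $f^{\nu_e+1}\in I^{[p^e]}$ to the $p$-th power for the upper bound, and iterate to get $\nu_0\leq c(f,I)\leq\nu_0+1$. Your colon-ideal justification of the flatness step and your monotonicity argument, which shows that the limit defining $c(f,I)$ actually exists, are slightly more explicit than the paper, which takes the existence of the limit for granted.
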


\begin{proof}
Since $\kx$ is regular, Frobenius is faithfully flat. Hence $f^{\nu_e(f,I)}\notin I^{[p^e]}$ implies $f^{p\nu_e(f,I)}\notin I^{[p^{e+1}]}$. Therefore, $p\nu_e(f,I)\leq\nu_{e+1}(f,I)$, and by induction $p^e\nu_0(f,I)\leq\nu_e(f,I)$. Similarly, since $f^{\nu_e(f,I)+1}\in I^{[p^e]}$, we have $f^{p(\nu_e(f,I)+1)}\in I^{[p^{e+1}]}$. Hence $p(\nu_e(f,I)+1)\geq\nu_{e+1}(f,I)+1$. Thus $\nu_0(f,I)\leq\frac{\nu_e(f,I)}{p^e}\leq\nu_0(f,I)+1$, and the result follows by taking $e\to\infty$.
\end{proof}

\begin{example}
\textup{Assume $p>5$ and let $f\in\mathfrak{m}^2\subset k[[x,y]]$. By~\cite{greuel1990simple}, either $f\in\langle x,y^2\rangle^3$, up to contact equivalence, or $f$ is contact equivalent to an ADE singularity. In the first case, assigning weights $2$ and $1$ to $x$ and $y$, respectively, gives $\fpt(f)\leq\frac12$ by the weighted-degree estimate of Proposition~\ref{prop:weighted_fpt}. On the other hand, direct computation for ADE singularities (see Example 2.5 in~\cite{takagi2004f}) gives $\fpt(f)>\frac12$. Hence $f$ is contact equivalent to an ADE singularity if and only if $\fpt(f)>\frac12$.}
\end{example}

\begin{proposition}\label{prop:det_fpt_m-adic}
Let $0\neq f\in\mathfrak{m}^2$. Then there exists $N\gg0$ such that for every $g\in(\Jac(f)+\langle f\rangle)^N$ we have $\fpt(f)=\fpt(f+g)$.
\end{proposition}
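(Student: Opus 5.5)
The plan is to show that for $N$ large (I will aim for $N=3$), every $g\in(\Jac(f)+\langle f\rangle)^N$ makes $f+g$ contact equivalent to $f$. The proposition then follows at once from Remark~\ref{rem:fpt_contact}, which says that $\fpt$ is a contact invariant. Write $I=\Jac(f)+\langle f\rangle$. Since $f\in\mathfrak{m}^2$, every $\partial_i(f)$ lies in $\mathfrak{m}$, so $I\subset\mathfrak{m}$. Hence $I^3\subset\mathfrak{m}I^2$, and it suffices to treat $g\in\mathfrak{m}I^2$. Note that I deliberately avoid assuming $\tau(f)<\infty$. Contact determinacy (Proposition~\ref{prop:mt_basic}) would give a cleaner route when $I$ is $\mathfrak{m}$-primary, but it says nothing when $\tau(f)=\infty$. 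So I will use a Tougeron-type implicit function argument, which works over any complete local ring and in any characteristic.

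The key step is to solve the equation
\[
u\cdot (f+g)(x+h)=f
\]
for a unit $u$ and a vector $h=(h_1,\dots,h_n)$ with entries in $\mathfrak{m}$, so that $\varphi\colon x\mapsto x+h$ is an automorphism. I will do this by Newton iteration. Write $g=\sum_i a_i\partial_i(f)+bf$, with coefficients $a_i,b\in\mathfrak{m}I$; this is possible because $g\in\mathfrak{m}I^2=(\mathfrak{m}I)\cdot I$. Set $h^{(1)}=-a$ and $u^{(1)}=(1+b)^{-1}$. The formal Taylor expansion
\[
(f+g)(x+h)=f+g+\sum_i h_i\partial_i(f)+\sum_i h_i\partial_i(g)+Q(h),
\]
with $Q(h)$ quadratic in $h$, is valid in characteristic $p$ because it uses only Hasse derivatives, which have integral coefficients. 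It shows that after one step the new error lies in $\mathfrak{m}I^2$ with a strictly larger $\mathfrak{m}$-adic order. The point is that $a_ia_j\in\mathfrak{m}^2I^2$, and $\partial_i(g)\in\mathfrak{m}I$ up to terms in $\partial_i(\mathfrak{m}I^2)\subset I\cdot\partial(\ldots)+\ldots$, so every new term carries an extra factor of $\mathfrak{m}I\subset\mathfrak{m}^2$.

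Iterating produces sequences $h^{(k)}$ and $u^{(k)}$ whose successive differences lie in $\mathfrak{m}^{k}I$. These therefore converge $\mathfrak{m}$-adically in $\kx$ to $h$ and $u$ solving the equation exactly. Since $h\in\mathfrak{m}I\subset\mathfrak{m}^2$, the Jacobian matrix of $\varphi$ at $0$ is the identity, so $\varphi$ is an automorphism. Likewise $u\equiv1\bmod\mathfrak{m}$, so $u$ is a unit. This gives $f+g\equivk f$.

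The main obstacle is the bookkeeping that the error strictly improves at each step. One has to check that $\partial_i(g)$ for $g\in\mathfrak{m}I^2$ still lies in $\mathfrak{m}I$ modulo controlled terms. The derivative of a product $xy$ with $x,y\in I$ gives $\partial(x)y+x\partial(y)$, and this lies in $I$ but is not obviously in $\mathfrak{m}I$. If this estimate fails, my first fix is to enlarge $N$, say to $g\in\mathfrak{m}^2I^3$ or simply $I^N$ with $N=4$. Then the error after one step lies in $\mathfrak{m}^{k}I^2$, and the Leibniz rule keeps a full factor of $I^2$ together with a growing power of $\mathfrak{m}$. The statement only asks for some $N\gg0$, so this flexibility is enough.
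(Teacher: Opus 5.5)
Your proposal is correct and has the same structure as the paper's proof, which likewise shows $f+g\equivk f$ and then concludes by Remark~\ref{rem:fpt_contact}; the only difference is that the paper imports the Tougeron-type contact statement from Theorem 2.4 and Corollary 2.22 of \cite{svoray2025detecting}, whereas your Newton iteration proves it directly and gives the explicit value $N=3$. The step you flag as the main obstacle does go through, so no enlargement of $N$ is needed: with $I=\Jac(f)+\langle f\rangle$, write $g=\sum c_{jk}s_js_k$ with $c_{jk}\in\mathfrak{m}$ and $s_j\in\{f,\partial_1f,\dots,\partial_nf\}$; the Leibniz rule then gives $\partial_i g\in I^2+\mathfrak{m}I=\mathfrak{m}I$, because $I\subset\mathfrak{m}$, and the same computation gives $\partial_i(\mathfrak{m}^kI^2)\subset\mathfrak{m}^kI$ at every later step, so the error indeed improves from $\mathfrak{m}^kI^2$ to $\mathfrak{m}^{2k}I^2$.
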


\begin{proof}
By Theorem 2.4 and Corollary 2.22 in~\cite{svoray2025detecting}, there exists $N$ such that for every $g\in(\Jac(f)+\langle f\rangle)^N$ we have $f\equivk f+g$. Therefore, by Remark~\ref{rem:fpt_contact}, $\fpt(f)=\fpt(f+g)$.
\end{proof}

\begin{remark}
\begin{enumerate}
    \item \textup{Proposition~\ref{prop:det_fpt_m-adic} is a generalization of Theorem 4.3 in~\cite{hernandez2018local}, where it is assumed that $f$ has an isolated singularity and hence $\sqrt{\langle f\rangle+\Jac(f)}=\mathfrak{m}$. In addition, by Proposition~\ref{prop:mt_basic}, if $\tau(f)<\infty$, then contact determinacy gives the explicit safe choice $N=2\tau(f)-\ord(f)+3$: indeed, $\Jac(f)+\langle f\rangle\subset\mathfrak{m}$, so its $N$th power is contained in $\mathfrak{m}^{N}$, which is deep enough for $(N-1)$-determinacy. This still improves the bound given in~\cite{hernandez2018local}, namely $N=p^{2\tau(f)}\cdot n$.\footnote{We note that the author of~\cite{hernandez2018local}, Hern\'{a}ndez, was informed of the potential application of determinacy to $\mathfrak{m}$-adic constancy of $\fpt$ by Holger Brenner in a private conversation.}}
    \item \textup{The version of Proposition~\ref{prop:det_fpt_m-adic} obtained by replacing $\Jac(f)+\langle f\rangle$ with $\mathfrak{m}$ is not true if $f$ does not define an isolated singularity. For example, over $k[[x,y]]$, if $d$ is coprime to $p$, then by Theorem 3.1 of~\cite{hernandez2015F}, $\fpt(x^d+y^n)>\fpt(x^d)=\frac1d$ for every $n\gg d$. What we do know is that $\fpt$ is lower semicontinuous in the $\mathfrak{m}$-adic topology: for every $f\in\mathfrak{m}$ there exists $N$ such that for every $g\in\mathfrak{m}^N$ we have $\fpt(f+g)\geq\fpt(f)$. This follows from the ascending chain conjecture, first stated as Conjecture 4.4 in~\cite{blickle2009F} and proved by Sato in~\cite{sato2019ascending}.}
\end{enumerate}
\end{remark}

\begin{lemma}\label{lem:c(f,Jac)_fpt}
Let $0\neq f\in\mathfrak{m}$. Then:
\begin{enumerate}
    \item If $\mu(f)<\infty$, then $\fpt(f)\mu(f)\geq c(f,\Jac(f))$.
    \item If $0<\tau(f)<\infty$, then $\fpt(f)\tau(f)\geq c(f,\Jac(f)+\langle f\rangle)$.
\end{enumerate}
\end{lemma}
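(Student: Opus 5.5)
The plan is to compare the Frobenius powers of $\mathfrak{m}$ with those of $\Jac(f)$ (respectively $\Jac(f)+\langle f\rangle$). The link is the containment $\mathfrak{m}^{\mu(f)}\subset\Jac(f)$ (respectively $\mathfrak{m}^{\tau(f)}\subset\Jac(f)+\langle f\rangle$) from the last item of Proposition~\ref{prop:mt_basic}. I give the argument for the first item; the second is the same with $\tau(f)$ and $J=\Jac(f)+\langle f\rangle$ in place of $\mu(f)$ and $J=\Jac(f)$.

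First I check that $c(f,J)$ is defined. Since $\mu(f)<\infty$, the ideal $J=\Jac(f)$ contains a power of $\mathfrak{m}$, so it is $\mathfrak{m}$-primary. Hence $f\in\mathfrak{m}=\sqrt{J}$. Moreover $J$ is proper, since otherwise $\mu(f)=0$ and $c(f,J)$ would not be considered. In the second item, the hypothesis $\tau(f)>0$ is exactly what ensures $\Jac(f)+\langle f\rangle$ is proper.

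The key algebraic observation is the identity
\[
(\mathfrak{m}^{[q]})^{\mu}=(\mathfrak{m}^{\mu})^{[q]}, \qquad q=p^e,\ \mu=\mu(f).
\]
It holds because both ideals are generated by the monomials $(x_{i_1}\cdots x_{i_\mu})^{q}=x_{i_1}^q\cdots x_{i_\mu}^q$. Combined with $\mathfrak{m}^{\mu}\subset J$, this gives $(\mathfrak{m}^{[q]})^{\mu}\subset J^{[q]}$.

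Now fix $e$ and set $\nu=\nu_e(f)$. By definition $f^{\nu+1}\in\mathfrak{m}^{[q]}$. Raising to the $\mu$-th power gives
\[
f^{(\nu+1)\mu}\in(\mathfrak{m}^{[q]})^{\mu}\subset J^{[q]}.
\]
Every larger power of $f$ also lies in $J^{[q]}$. Therefore
\[
\nu_e(f,J)\leq(\nu_e(f)+1)\mu-1.
\]
Dividing by $p^e$ and letting $e\to\infty$ yields $c(f,J)\leq\mu\cdot\fpt(f)$, since $\mu/p^e\to 0$. Both limits exist by Lemma~\ref{lem:bound_c_nu}, as noted in Definition~\ref{def:Fpt}.

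I do not expect a serious obstacle. The only points needing care are the Frobenius-power identity above and the properness and radical hypotheses needed for $c(f,J)$ to make sense, which is where the assumptions $\mu(f)<\infty$ and $0<\tau(f)<\infty$ are used.
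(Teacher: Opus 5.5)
Your proof is correct and follows the paper's argument: the containment $\mathfrak{m}^{\mu(f)}\subset\Jac(f)$ (resp.\ $\mathfrak{m}^{\tau(f)}\subset\Jac(f)+\langle f\rangle$) gives $(\mathfrak{m}^{[p^e]})^{\mu(f)}\subset\Jac(f)^{[p^e]}$, hence $\nu_e(f,\Jac(f))+1\leq\mu(f)(\nu_e(f)+1)$, and one divides by $p^e$ and lets $e\to\infty$. Your additional checks are correct and fill in details the paper leaves implicit: the identity $(\mathfrak{m}^{[q]})^{\mu}=(\mathfrak{m}^{\mu})^{[q]}$, and the properness of $J$ and $f\in\sqrt{J}$ needed for $c(f,J)$ to be defined, including the degenerate case $\mu(f)=0$, which item 1 of the statement does not exclude.
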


\begin{proof}
If $\mu(f)<\infty$, Proposition~\ref{prop:mt_basic} gives $\mathfrak{m}^{\mu(f)}\subset\Jac(f)$, and hence
\[
(\mathfrak{m}^{[p^e]})^{\mu(f)}\subset\Jac(f)^{[p^e]}.
\]
Since $f^{\nu_e(f)+1}\in\mathfrak{m}^{[p^e]}$, it follows that
\[
\mu(f)(\nu_e(f)+1)\geq\nu_e(f,\Jac(f))+1.
\]
Dividing by $p^e$ and taking the limit proves the first assertion.  Similarly, if $0<\tau(f)<\infty$, then $\mathfrak{m}^{\tau(f)}\subset\Jac(f)+\langle f\rangle$, so
\[
(\mathfrak{m}^{[p^e]})^{\tau(f)}
\subset(\Jac(f)+\langle f\rangle)^{[p^e]}.
\]
The same argument gives
\[
\tau(f)(\nu_e(f)+1)
\geq\nu_e(f,\Jac(f)+\langle f\rangle)+1,
\]
and the second assertion follows after dividing by $p^e$ and taking the limit.
\end{proof}

\begin{corollary}\label{cor:BS_F}
If $0<\mu(f)<\infty$, then $\fpt(f)\geq\frac{\nu_0(f,\Jac(f))}{\mu(f)}$.
\end{corollary}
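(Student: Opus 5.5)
This follows by combining the first part of Lemma~\ref{lem:c(f,Jac)_fpt} with the lower bound in Lemma~\ref{lem:bound_c_nu}, applied to the ideal $I=\Jac(f)$. First we check the hypotheses of Lemma~\ref{lem:bound_c_nu}. Since $0<\mu(f)$, the ideal $\Jac(f)$ is proper: if it were the unit ideal, the quotient would be zero and $\mu(f)=0$. Since $\mu(f)<\infty$, the seventh item of Proposition~\ref{prop:mt_basic} gives $\mathfrak{m}^{\mu(f)}\subset\Jac(f)$. Hence $\sqrt{\Jac(f)}=\mathfrak{m}$, and in particular $f\in\sqrt{\Jac(f)}$, because $f\in\mathfrak{m}$. (This needs $f\in\mathfrak{m}$, which is implicit in the setting; we also have $f\neq 0$, since $\mu(0)=\infty$.) So $c(f,\Jac(f))$ is well defined.

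Lemma~\ref{lem:bound_c_nu} with $I=\Jac(f)$ then gives
\[
\nu_0(f,\Jac(f))\leq c(f,\Jac(f)).
\]
The first part of Lemma~\ref{lem:c(f,Jac)_fpt} gives
\[
c(f,\Jac(f))\leq \fpt(f)\mu(f).
\]
Chaining the two inequalities and dividing by $\mu(f)>0$ yields
\[
\fpt(f)\geq\frac{\nu_0(f,\Jac(f))}{\mu(f)}.
\]

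The only point requiring care is the hypothesis check above, namely that $\Jac(f)$ is proper and that $f$ lies in its radical. The rest is a direct chaining of the two lemmas.
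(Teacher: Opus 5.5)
Your proof is correct and is the paper's argument: it chains the lower bound $\nu_0(f,\Jac(f))\leq c(f,\Jac(f))$ from Lemma~\ref{lem:bound_c_nu} with $c(f,\Jac(f))\leq\fpt(f)\mu(f)$ from the first part of Lemma~\ref{lem:c(f,Jac)_fpt}. You also check explicitly that $\Jac(f)$ is proper and $\mathfrak{m}$-primary, so both lemmas apply; the paper leaves this implicit.
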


\begin{proof}
This follows from Lemmas~\ref{lem:bound_c_nu} and~\ref{lem:c(f,Jac)_fpt}.
\end{proof}

\begin{corollary}
If $0<\tau(f)<\infty$, then $\fpt(f)\geq\frac{1}{\tau(f)}\left(1-\frac{1}{\ord(f)}\right)$.
\end{corollary}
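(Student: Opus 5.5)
Combining the second part of Lemma~\ref{lem:c(f,Jac)_fpt} with Lemma~\ref{lem:bound_c_nu} gives
\[
\fpt(f)\,\tau(f)\;\geq\; c\bigl(f,\Jac(f)+\langle f\rangle\bigr)\;\geq\;\nu_0\bigl(f,\Jac(f)+\langle f\rangle\bigr).
\]
Since $f\in\Jac(f)+\langle f\rangle$, we have $\nu_0(f,\Jac(f)+\langle f\rangle)=0$, so this is too weak. Instead I will bound $c(f,J)$, with $J=\Jac(f)+\langle f\rangle$, directly from below by $1-\frac{1}{\ord(f)}$. That is, I want to show $f^N\notin J^{[p^e]}$ for $N$ close to $p^e\bigl(1-\frac{1}{\ord(f)}\bigr)$.

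Set $d=\ord(f)\geq 1$; the case $d=1$ gives the trivial bound $0$, so assume $d\geq 2$. Every partial derivative has order at least $d-1$, so $J\subset\mathfrak{m}^{d-1}$ and hence $J^{[p^e]}\subset(\mathfrak{m}^{d-1})^{[p^e]}\subset \mathfrak{m}^{(d-1)p^e}$. Order comparison alone is not enough, though. Since $\ord(f^N)=Nd$, for $Nd<(d-1)p^e$ we get $f^N\notin\mathfrak{m}^{(d-1)p^e}\supset J^{[p^e]}$. This gives $\nu_e(f,J)\geq\lceil (d-1)p^e/d\rceil-1$, so
\[
c(f,J)\;\geq\;\lim_{e\to\infty}\frac{(d-1)p^e/d-1}{p^e}\;=\;1-\frac1d.
\]
This step rests on two facts: $\ord(f^N)=N\ord(f)$, which holds because $\operatorname{gr}_{\mathfrak{m}}\kx$ is a polynomial ring and hence a domain; and $(\mathfrak{m}^{d-1})^{[p^e]}\subset\mathfrak{m}^{(d-1)p^e}$, since each generator of the Frobenius power is a $p^e$-th power of an element of order at least $d-1$.

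Feeding this into Lemma~\ref{lem:c(f,Jac)_fpt}(2) gives $\fpt(f)\tau(f)\geq 1-\frac1d$, which is the claimed inequality. The only point to watch is whether the argument really uses $\Jac(f)$ rather than $J$. It does not: $f$ itself has order $d\geq d-1$, so the inclusion $J\subset\mathfrak{m}^{d-1}$ holds including the generator $f$. No determinacy result is needed, and I expect no real obstacle beyond making the order inequality and the limit precise.
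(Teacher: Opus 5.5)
Your proof is correct and is essentially the paper's argument. The paper also combines the second part of Lemma~\ref{lem:c(f,Jac)_fpt} with the lower bound $c(f,\Jac(f)+\langle f\rangle)\geq 1-\frac{1}{\ord(f)}$, deduced from $f\notin\mathfrak{m}^{\ord(f)+1}$ and $\Jac(f)\subset\mathfrak{m}^{\ord(f)-1}$; the only difference is that it cites item 3 of Proposition 1.7 in \cite{mustata2005f} for this bound, whereas you prove it directly by order comparison. The sentence ``Order comparison alone is not enough, though'' contradicts the valid order-comparison argument you then give and should be deleted.
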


\begin{proof}
Note that $f\notin\mathfrak{m}^{\ord(f)+1}$ and $\Jac(f)\subset\mathfrak{m}^{\ord(f)-1}$. Therefore, from item 3 of Proposition 1.7 in~\cite{mustata2005f} $c(f,\Jac(f)+\langle f\rangle)$ is bounded below by $1-\frac{1}{\ord(f)}$, and the result follows from Lemma~\ref{lem:c(f,Jac)_fpt}.
\end{proof}

\begin{proof}[Proof of Part~\ref{thm:fpt_milnor} of Theorem~\ref{ThmA}]
This follows from Lemmas~\ref{lem:c(f,Jac)_fpt} and~\ref{lem:bound_c_nu}, together with item 1 of Proposition~\ref{prop:mt_basic}, which gives $\frac{\mu(f)}{\tau(f)}\leq\nu_0(f,\Jac(f))+1$.
\end{proof}

Part~\ref{thm:fpt_milnor} of Theorem~\ref{ThmA} leads directly to the question of what happens if $\tau(f)$ is finite but $\mu(f)$ is not. In this case, regarding $\frac{1}{\mu(f)}$ as $0$, one would expect $\fpt(f)\geq\frac{1}{\tau(f)}$. This is, in fact, true and appears as Part~\ref{cor:mil_infinite} of Theorem~\ref{ThmA}. It follows from the following lemma, whose proof is inspired by the proof of Proposition 3.3 in~\cite{pellikaan1989series}:

\begin{lemma}\label{lem:milnor_limit}
Let $f\in\kx$ such that $\mu(f)=\infty$ and $\tau(f)<\infty$. Then there exist $a\in\mathfrak{m}\setminus\mathfrak{m}^2$ and $l_0\gg0$ such that for every $l>l_0$ not divisible by $p$ we have $\mu(f+a^l)<\infty$.
\end{lemma}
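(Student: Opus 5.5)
The plan is to reduce finiteness of $\mu(f+a^l)$ to a statement about the zero locus of the Jacobian ideal. I will choose the linear form $a$ generically so that this locus can only meet $V(a)$ at the origin. Then I will use a curve-selection argument with order comparison along branches to exclude everything else.

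\emph{Step 1 (preliminaries).} Since $\tau(f)<\infty$, item 7 of Proposition~\ref{prop:mt_basic} gives that $\Jac(f)+\langle f\rangle$ is $\mathfrak{m}$-primary, so $V(\Jac(f))\cap V(f)=\{0\}$. Because $V(f)$ is a hypersurface, this forces $\dim V(\Jac(f))\le 1$. Since $\mu(f)=\infty$, $V(\Jac(f))$ is exactly a curve germ with finitely many branches $\gamma_1,\dots,\gamma_r$. Choose $a\in\mathfrak{m}\setminus\mathfrak{m}^2$ to be a generic linear form, so that $a\circ\gamma_j\not\equiv 0$ for every $j$, i.e.\ $V(\Jac(f))\cap V(a)=\{0\}$. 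After a linear change of coordinates (harmless by Proposition~\ref{prop:mt_basic}, items 3–4) I may take $a=x_1$.

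For $l\ge 2$ we then have $\Jac(f+x_1^l)=\langle \partial_1 f+l x_1^{l-1},\partial_2 f,\dots,\partial_n f\rangle$, where $l\neq0$ in $k$ because $p\nmid l$. For $l$ larger than the contact-determinacy bound $2\tau(f)-\ord(f)+2$ of Proposition~\ref{prop:mt_basic}, item 5, we get $f+x_1^l\equivk f$ and hence $\tau(f+x_1^l)=\tau(f)<\infty$. So $V(\Jac(f+x_1^l))\cap V(f+x_1^l)=\{0\}$, and by item 1 of Proposition~\ref{prop:mt_basic} it suffices to show that $V(\Jac(f+x_1^l))=\{0\}$.

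\emph{Step 2 (main obstacle: controlling branches).} Suppose instead that $V(\Jac(f+x_1^l))$ contains a branch, parametrized by $\gamma(t)\neq 0$. Along $\gamma$ all $\partial_i f$ with $i\ge2$ vanish, and
\[
\partial_1 f(\gamma)=-l\,x_1(\gamma)^{l-1}.
\]
There are two cases.
\begin{enumerate}
    \item If $x_1\circ\gamma\equiv0$, then $\partial_1 f(\gamma)\equiv0$ too. So $\gamma\subset V(\Jac(f))\cap V(x_1)=\{0\}$, a contradiction.
    \item Otherwise $\ord_t \partial_1 f(\gamma)=(l-1)\ord_t x_1(\gamma)$. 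The aim is to show this cannot happen for $l\gg0$.
\end{enumerate}

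The difficulty in the second case is that $\gamma$ depends on $l$. I would handle it with the auxiliary ideal $J=\langle\partial_2 f,\dots,\partial_n f\rangle$. Using genericity of $x_1$ once more, I would arrange that $V(J)$ is a curve: it contains $V(\Jac(f))$, has one fewer equation, and its intersection with $V(x_1,\partial_1 f)$ is $\{0\}$. Then $V(J)$ has finitely many branches $\delta_1,\dots,\delta_s$, independent of $l$, and $\gamma$ must be one of them.

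On each $\delta_i$ with $\partial_1 f(\delta_i)\not\equiv0$, the two orders $\ord_t\partial_1 f(\delta_i)$ and $\ord_t x_1(\delta_i)\ge1$ are fixed finite numbers. Hence the equality $\ord_t\partial_1 f(\delta_i)=(l-1)\ord_t x_1(\delta_i)$ fails once $l-1>\max_i \ord_t\partial_1 f(\delta_i)$. On branches with $\partial_1 f(\delta_i)\equiv0$, the branch lies in $V(\Jac(f))$, so $x_1(\delta_i)\not\equiv0$ and the defining equation fails.

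Taking $l_0$ to be the maximum of the determinacy bound and these order bounds gives the lemma. The step I expect to be genuinely delicate is justifying that a generic $x_1$ makes $V(J)$ one-dimensional in characteristic $p$. If this fails, I would instead argue with the finitely many branches of $V(J)\cap V(f+x_1^l)$ using $\tau$-finiteness, following the argument of Proposition 3.3 in~\cite{pellikaan1989series}.
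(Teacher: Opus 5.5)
Your proof has a genuine gap at the step you flag yourself. You need $V(J)$, with $J=\langle\partial_2 f,\dots,\partial_n f\rangle$, to be a curve, so that its branches are finitely many and independent of $l$. The reason you give only yields $\dim V(J)\le 2$: cutting $V(J)$ by the two equations $x_1,\partial_1 f$ gives $\{0\}$.

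Worse, the only genericity you actually impose on $a$ (that it not vanish on any branch of $V(\Jac(f))$) does not imply it, and without it the lemma can fail for that $a$. Take $p$ odd, $n=3$, and $f=x_1^2-2x_1x_3^p+x_2^2x_3$.
\begin{itemize}
\item $\Jac(f)=\langle x_1-x_3^p,\,x_2x_3,\,x_2^2\rangle$, so $V(\Jac(f))$ is the single branch $t\mapsto(t^p,0,t)$. Hence $\mu(f)=\infty$ and $\tau(f)=2p+1$; the quotient is $k[[x_2,x_3]]/\langle x_2x_3,x_2^2,x_3^{2p}\rangle$.
\item The form $a=x_1$ restricts to $t^p\not\equiv 0$ on this branch, so it passes your test.
\item Yet $J=\langle 2x_2x_3,x_2^2\rangle$, so $V(J)=\{x_2=0\}$ is a surface.
\item Also $V(\Jac(f+x_1^l))=V(x_2,\,2x_1-2x_3^p+lx_1^{l-1})$ is a curve, so $\mu(f+x_1^l)=\infty$ for every $l$.
\end{itemize}
So ``$V(J)$ is a curve'' is a separate condition on $a$, and it is the heart of the lemma. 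Your fallback via branches of $V(J)\cap V(f+x_1^l)$ is not spelled out and runs into the same dependence on $l$.

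The missing ingredient is what the paper takes from Ulrich/Huneke: after a linear change of coordinates, $\mathfrak a=\langle\partial_2 f,\dots,\partial_n f\rangle$ has height $n-1$, and moreover $\mathfrak a+\langle x_1\rangle$ is $\mathfrak m$-primary. You can prove the version you need by prime avoidance.
\begin{itemize}
\item $\Jac(f)$ has height $n-1$. Over the infinite field $k$, choose successively $g_i=D_i f$, with $D_i$ constant-coefficient derivations, so that $\operatorname{ht}\langle g_1,\dots,g_i\rangle=i$. At each step, avoid the finitely many minimal primes of the previous ideal; none of them contains $\Jac(f)$.
\item At each step also avoid finitely many further proper subspaces, so that the tangent directions of the branches of $V(\Jac(f))$ stay outside $\operatorname{span}(D_1,\dots,D_{n-1})$.
\item Let $x_1$ be the linear form annihilated by this hyperplane of derivations. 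Since $\partial_2,\dots,\partial_n$ span exactly the constant derivations killing $x_1$, you get $J=\langle g_1,\dots,g_{n-1}\rangle$ of height $n-1$, together with your condition that $x_1$ vanishes on no branch of $V(\Jac(f))$.
\end{itemize}

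With that in place, the rest of your argument is sound and differs from the paper only in the exclusion mechanism.
\begin{itemize}
\item You rule out each fixed branch $\delta_i$ of $V(J)$ once $l-1>\ord_t\partial_1 f(\delta_i)$, by comparing $t$-orders, and you dispose of branches inside $V(x_1)$ via $V(\Jac(f))\cap V(x_1)=\{0\}$.
\item The paper instead observes that two admissible values $l_1>l_2$ with $\partial_1 f+lx_1^{l-1}\in\mathfrak p_i$ would force $x_1\in\mathfrak p_i$.
\end{itemize}
Finally, your detour through contact determinacy and $\tau(f+x_1^l)$ is unnecessary: $V(\Jac(f+x_1^l))=\{0\}$ already gives $\mu(f+x_1^l)<\infty$.
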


\begin{proof}
Since $\tau(f)<\infty$, the ideal $\langle f\rangle+\Jac(f)$ is $\mathfrak{m}$-primary, while $\mu(f)=\infty$ implies that $\frac{\kx}{\Jac(f)}$ is one-dimensional. By Theorem 1.7 in~\cite{ulrich1987theory}, alternatively by~\cite{huneke1987structure}, after a linear change of coordinates we may assume that $\mathfrak{a}=\langle\partial_2(f),\dots,\partial_n(f)\rangle$ has height $n-1$. Hence $\frac{\kx}{\mathfrak{a}}$ is a one-dimensional complete intersection. We may also choose the coordinates so that $\mathfrak{a}+\langle x_1\rangle$ is $\mathfrak{m}$-primary. Let $\mathfrak{p}_1,\dots,\mathfrak{p}_r$ be the associated primes of $\frac{\kx}{\mathfrak{a}}$; these are precisely its minimal primes. For a fixed $\mathfrak{p}_i$ there is at most one $l$, with $p\nmid l$, such that $\partial_1(f)+lx_1^{l-1}\in\mathfrak{p}_i$. Indeed, if this holds for $l_1>l_2$, then
\[
x_1^{l_2-1}(l_1x_1^{l_1-l_2}-l_2)\in\mathfrak{p}_i.
\]
Since $x_1\notin\mathfrak{p}_i$ and $l_2$ is nonzero in $k$, the second factor is a unit, a contradiction. Thus, for every sufficiently large $l$ not divisible by $p$, the element $\partial_1(f)+lx_1^{l-1}$ is a nonzerodivisor on $\frac{\kx}{\mathfrak{a}}$. Consequently $\Jac(f+x_1^l)=\mathfrak{a}+\langle\partial_1(f)+lx_1^{l-1}\rangle$ is $\mathfrak{m}$-primary, and hence $\mu(f+x_1^l)<\infty$.
\end{proof}

\begin{remark}\label{rem:limit_milnor}
\textup{In Lemma~\ref{lem:milnor_limit}, we must have $\mu(f+a^l)\to\infty$ as $l\to\infty$, where $l$ is not divisible by $p$. Otherwise the set $\{2\mu(f+a^l)-\ord(f)+2\}_{l>l_0}$ would be bounded; let $N$ be an upper bound. Since $\ord(f)=\ord(f+a^l)$ for $l\gg0$, right determinacy implies that each $f+a^l$ is $N$-determined. Since $f+a^l\to f$ in the $\mathfrak{m}$-adic topology, for $l\gg0$ we obtain $f+a^l\equivr f$, and hence $\mu(f)=\mu(f+a^l)<\infty$, a contradiction.}
\end{remark}

\begin{proof}[Proof of Part~\ref{cor:mil_infinite} of Theorem~\ref{ThmA}]
By Lemma~\ref{lem:milnor_limit}, there exists $a$ such that $f_l=f+a^l$ satisfies $\mu(f_l)<\infty$ for all sufficiently large $l$ not divisible by $p$. By contact determinacy, $f_l\equivk f$ for all sufficiently large such $l$, because $a^l$ has arbitrarily high order. Hence $\tau(f_l)=\tau(f)$ and, by Remark~\ref{rem:fpt_contact}, $\fpt(f_l)=\fpt(f)$. Part~\ref{thm:fpt_milnor} of Theorem~\ref{ThmA} therefore gives
\[
    \fpt(f)\geq\frac{1}{\tau(f)}-\frac{1}{\mu(f_l)}.
\]
By Remark~\ref{rem:limit_milnor}, $\mu(f_l)\to\infty$ as $l\to\infty$ through integers not divisible by $p$. Taking the limit yields $\fpt(f)\geq\frac{1}{\tau(f)}$.
\end{proof}

\section{$F$-Pure Threshold and Semigroup of Values}

In this section, we show how the generators of the semigroup of values of an irreducible power series in two variables over $k$ can be used to bound its $F$-pure threshold. Throughout this section, we assume that $n=2$ and denote $k[[x_1,x_2]]$ by $k[[x,y]]$.

\begin{definition}\label{def:semigroup}
Given an irreducible $f\in k[[x,y]]$, the \textbf{semigroup of values} of $f$, denoted $\Gamma(f)$, is the set of all values
\[
i(f,g)=\dim_k\left(\frac{k[[x,y]]}{\langle f,g\rangle}\right),
\]
where $g\in k[[x,y]]\setminus\langle f\rangle$.
\end{definition}

We summarize the main results about $\Gamma(f)$ that we use throughout this section in the following lemma:

\begin{lemma}\label{lem:semi_prope}
Let $f\in k[[x,y]]$ be an irreducible power series. Then:
\begin{enumerate}
    \item If $g,h\notin\langle f\rangle$, then $i(f,gh)=i(f,g)+i(f,h)$.
    \item If $g\notin\langle f\rangle$, then $i(f,g)\geq\ord(f)\ord(g)$.
    \item $\Gamma(f)$ is a numerical semigroup, i.e.\ $\gcd(\Gamma(f))=1$.
    \item $\min(\Gamma(f)\setminus\{0\})=\ord(f)$.
    \item There exists a finite sequence of integers $\beta_0,\dots,\beta_K$ that generates $\Gamma(f)$ as a semigroup and is minimal under inclusion.
    \item $\Gamma(f)$ is generated by two integers $\alpha$ and $\beta$ if and only if $f$ is contact equivalent to a power series of the form
    \[
        x^\alpha+y^\beta+
        \sum_{\alpha j+\beta i>\alpha\beta}a_{i,j}x^iy^j
    \]
    for some $a_{i,j}\in k$.
    \item $f$ is contact equivalent to $y^{\beta_0}+\sum_{l=0}^{\beta_0-1}c_l(x)y^l$, where $c_l(x)\in k[[x]]$ satisfies $\ord(c_l(x))\geq\frac{(\beta_0-l)\beta_1}{\beta_0}$ for every $l>0$ and $\ord(c_0(x))=\beta_1$.
\end{enumerate}
\end{lemma}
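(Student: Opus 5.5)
This lemma collects classical facts about branches, so the plan is to reduce each item to standard theory rather than reprove it. The tool throughout is a Hamburger--Noether (or, when $p\nmid\ord(f)$, Puiseux-type) parametrization. Since $f$ is irreducible and $k$ is algebraically closed, the normalization of $k[[x,y]]/\langle f\rangle$ is $k[[t]]$. The map $k[[x,y]]/\langle f\rangle\hookrightarrow k[[t]]$ is given by $x\mapsto x(t)$, $y\mapsto y(t)$. Then $i(f,g)=\ord_t g(x(t),y(t))$, because $k[[t]]$ is a finite module over the local ring $k[[x,y]]/\langle f\rangle$ with residue field $k$.

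\textbf{Items 1--4.} Item 1 is immediate from the valuation property of $\ord_t$. For item 2, after a generic linear change of coordinates $\ord_t x(t)=\ord(f)$ and $\ord_t y(t)\geq\ord(f)$, so every element of $\mathfrak{m}^{\ord(g)}$ has $t$-order at least $\ord(f)\ord(g)$. Item 4 follows from item 2 together with taking $g$ a generic linear form, which realizes the value $\ord(f)$. For item 3, closure under addition is item 1. The condition $\gcd(\Gamma(f))=1$ comes from the conductor: $k[[t]]/\overline{R}$ is finite-dimensional, so $t^c k[[t]]\subset \overline{R}$ for some $c$, and therefore $\Gamma(f)$ contains all integers $\geq c$. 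Item 5 holds because any numerical semigroup is finitely generated by its minimal (Apéry-type) generators, and these are unique.

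\textbf{Items 6 and 7.} These are the substantive part. I would cite the positive-characteristic theory of Campillo (\emph{Algebroid curves in positive characteristic}) and Hefez's survey, which expresses $\Gamma(f)$ via Hamburger--Noether expansions. For item 7, the steps are as follows.

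\begin{enumerate}
\item Choose coordinates with $\ord_t x(t)=\beta_0$.
\item Apply Weierstrass preparation in $y$ to get a distinguished polynomial of degree $\beta_0$ (using $i(f,x)=\beta_0$).
\item Choose $y$ so that $\ord_t y(t)=\beta_1$, the first value not divisible by $\beta_0$. Such a $y$ can be obtained by subtracting from $y$ suitable power series in $x$. This is where Hamburger--Noether replaces Puiseux when $p\mid\beta_0$.
\item Read off the coefficient bounds. We have $c_0(x)=\pm\mathrm{Norm}(y)$ up to a unit factor, so $i(f,y)=\ord c_0=\beta_1$. The inequalities $\ord c_l\geq(\beta_0-l)\beta_1/\beta_0$ follow from the Newton polygon: all roots $y(t)$ of $f$ in $\overline{k((x))}$ have $x$-adic value at least $\beta_1/\beta_0$, and $c_l$ is an elementary symmetric function of $\beta_0-l$ of them.
\end{enumerate}

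For item 6, apply item 7 with $K=1$. The Newton polygon of $y^{\beta_0}+c_0(x)+\cdots$ is then the single edge from $(0,\beta_0)$ to $(\beta_1,0)$, with every other monomial strictly above the line $\beta_1 j+\beta_0 i=\beta_0\beta_1$. After swapping the names of $x$ and $y$ and rescaling to make the leading coefficients $1$, this is exactly the stated form. The extra monomials on the edge itself are absent because $\gcd(\beta_0,\beta_1)=1$, so the edge has no interior lattice points. Conversely, a series of that form is weighted-homogeneous-leading with coprime exponents and is parametrized by $x=t^{\beta}u$, $y=t^{\alpha}v$; a direct computation of values then gives $\langle\alpha,\beta\rangle$.

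\textbf{Main obstacle.} The hard step is step 3 in positive characteristic. Puiseux expansions fail for wild branches ($p\mid\beta_0$), so the bound on $\ord c_l$ must come from the Hamburger--Noether or valuative description rather than from conjugate roots. Because of this, I would cite Campillo for item 7 rather than reprove it.
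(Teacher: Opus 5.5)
Your proof is correct, but it is organized differently from the paper's. The paper's proof is purely bibliographic:
\begin{itemize}
\item items 1, 2, 4 are quoted from Fulton and the author's earlier work;
\item item 3 comes from Barroso--Hefez;
\item item 5 uses the greedy sequence $\beta_{i+1}=\min(\Gamma(f)\setminus(\beta_0\mathbb{N}+\cdots+\beta_i\mathbb{N}))$, with termination cited from Hefez;
\item item 6 comes from Barroso--Hefez's description of branches with a two-generated semigroup;
\item item 7 comes from Weierstrass preparation plus the Semigroup Theorem.
\end{itemize}
You instead obtain items 1--4 uniformly from the valuation $\ord_t$ on the normalization $k[[t]]$. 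The conductor gives $\gcd=1$; note that you mean $t^ck[[t]]\subset R$, not $\overline{R}$. You get item 5 from numerical-semigroup combinatorics, which makes the termination citation unnecessary once item 3 is known. The main structural difference is that you deduce item 6 from item 7. For $K=1$, $\gcd(\beta_0,\beta_1)=1$ makes $(\beta_0-l)\beta_1/\beta_0$ non-integral for $0<l<\beta_0$, so the weighted initial form is $y^{\beta_0}+ax^{\beta_1}$. Your route is more self-contained. The paper's route is shorter but treats items 6 and 7 as independent black boxes.

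Your ``main obstacle'' is not one: step 4 is already characteristic-free. The Weierstrass polynomial $P$ is irreducible over the complete field $k((x))$, so $v_x$ extends uniquely. Hence all roots of $P$ in $\overline{k((x))}$ have the same value $\ord(c_0)/\beta_0=\beta_1/\beta_0$, and this holds even in the inseparable case. Only Puiseux \emph{series} fail in the wild case, and you never use them.

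Step 3 is also elementary. While $\ord_t(y-h(x))=m\beta_0$, subtract a suitable multiple of $x^m$; the process stops because $f\in\mathfrak{m}^2$. The terminal value $b\notin\beta_0\mathbb{N}$ equals $\beta_1$: expanding any $g$ in $x$ and $y'=y-h(x)$ shows that every value below $b$ lies in $\beta_0\mathbb{N}$. So no appeal to Campillo is needed.

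In the converse of item 6, coprimality of $\alpha,\beta$ is a hypothesis you must impose, not a feature of the form. In characteristic $2$, $x^4+y^6+xy^5$ is irreducible and has that shape with $(\alpha,\beta)=(4,6)$, yet $\Gamma\neq\langle 4,6\rangle$ by item 3. With $\gcd(\alpha,\beta)=1$, your ``direct computation'' goes as follows. Weierstrass-divide $g$ by $f$ to get $\sum_{j<\beta}g_j(x)y^j$. Its terms have values $\beta\ord(g_j)+\alpha j$, which are pairwise incongruent mod $\beta$, so there is no cancellation and the value lies in $\langle\alpha,\beta\rangle$.
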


\begin{proof}
For item 1, see, for example, item 6 of Lemma 3.3 in~\cite{svoray2026Milnor}; for item 2, see Theorem 3 of Chapter 3.3 in~\cite{fulton2008algebraic}; for item 3, see Lemma 3.1 in~\cite{barroso2012approach}; and for item 4, see item 5 of Lemma 3.3 in~\cite{svoray2026Milnor}. For item 5, set $\beta_0(f)=\min(\Gamma(f)\setminus\{0\})$ and $\beta_{i+1}(f)=\min(\Gamma(f)\setminus(\beta_0\mathbb{N}+\cdots+\beta_i\mathbb{N}))$. By definition, the sequence $\beta_0,\beta_1,\dots$ generates $\Gamma(f)$ and is minimal under inclusion, while Proposition 6.1 in Chapter 6 of~\cite{hefez2003irreducible} shows that this sequence terminates. For item 6 in arbitrary characteristic, see the description of branches with a prescribed two-generated semigroup in Section 10, in particular Corollary 10.7, of~\cite{barroso2012approach}; compare also Proposition 2.1 in Chapter 6 of~\cite{zariski2006moduli}. Finally, item 7 follows from the Weierstrass  preparation theorem (see, for example,~\cite{o1972weierstrass}) and the Semigroup Theorem (see Theorem 3.2 in~\cite{barroso2012approach}).
\end{proof}

\begin{remark}\label{rem:2dim_sing}
\begin{enumerate}
    \item \textup{If $f\in k[[x,y]]$ is irreducible, then $\tau(f)<\infty$, since $f$ defines a reduced plane curve whose singular locus is supported at the closed point. This need not be true for the Milnor number. For example, if $\operatorname{char}(k)=p$, then $\mu(x^p+y^{p+1})=\infty$ but $\tau(x^p+y^{p+1})=p^2$.}
    \item \textup{By item 7 of Lemma~\ref{lem:semi_prope}, up to contact equivalence we have $i(f,x)=\beta_0=\ord(f)$. This gives an alternative proof of $\fpt(f)\geq \frac{1}{\ord(f)}$ (see Proposition 4.1 in~\cite{takagi2004f}). Indeed, from $f^{\nu_e(f)+1}\in\langle x,y\rangle^{[p^e]}$ we may write $f^{\nu_e(f)+1}=gx^{p^e}+hy^{p^e}$. Reducing modulo $x$ gives $(\nu_e(f)+1)\ord(f)\geq p^e$, and hence $\frac{1}{\ord(f)}\leq\frac{\nu_e(f)+1}{p^e}\to\fpt(f)$.}
    \item \textup{The semigroup of values is an important tool in the study of curve singularities and of one-dimensional local rings more generally. For more information, see, for example,~\cite{assi2020numerical,barroso2012approach,barroso2022note,castellanos2005semigroup,campillo1994gorenstein,d2025value,de1987semigroup,kunz1970value,zariski2006moduli}.}
    \item \textup{In item 5 of Lemma~\ref{lem:semi_prope}, $K>0$ if and only if $f\in\mathfrak{m}^2$, since a numerical semigroup has a unique generator if and only if it is $\mathbb{N}$, which is equivalent to $\ord(f)=1$.}
\end{enumerate}
\end{remark}

\begin{definition}\label{def:weighted_initial}
Let $w=(w_1,\dots,w_n)\in\mathbb{N}_{>0}^n$. For $\underline{x}^a=x_1^{a_1}\cdots x_n^{a_n}$, define $\operatorname{wt}_w(\underline{x}^a)=\sum_iw_i a_i$. If $0\neq f=\sum_a c_a\underline{x}^a\in k[[\underline{x}]]$, let $\ord_w(f)=\min\{\operatorname{wt}_w(\underline{x}^a):c_a\neq0\}$ and let $\operatorname{in}_w(f)$ be the sum of the terms of $f$ having $w$-weight $\ord_w(f)$.
\end{definition}

\begin{proposition}\label{prop:weighted_fpt}
Let $0\neq f\in\mathfrak{m}\subset k[[x_1,\dots,x_n]]$, let $w=(w_1,\dots,w_n)\in\mathbb{N}_{>0}^n$, set $d=\ord_w(f)$ and $g=\operatorname{in}_w(f)$. Then
\[
    \fpt(g)\leq\fpt(f)\leq
    \min\left\{1,\frac{w_1+\cdots+w_n}{d}\right\}.
\]
\end{proposition}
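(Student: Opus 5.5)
We prove the three inequalities separately: the upper bound $1$, the weighted upper bound, and the lower bound $\fpt(g)\leq\fpt(f)$.

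\textbf{The bound $\fpt(f)\leq 1$.} Since $f\in\mathfrak{m}$, we have $f^{p^e}\in\mathfrak{m}^{[p^e]}$. Hence $\nu_e(f)\leq p^e-1$, and so $\fpt(f)\leq1$.

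\textbf{The weighted bound.} The idea is a weight count. Every monomial occurring in $f^N$ has $w$-weight at least $Nd$. A monomial $\underline{x}^a$ lies outside $\mathfrak{m}^{[p^e]}$ exactly when $a_i\leq p^e-1$ for all $i$. Such a monomial has weight at most $(p^e-1)(w_1+\cdots+w_n)$. Since $\mathfrak{m}^{[p^e]}$ is a monomial ideal, $f^N\notin\mathfrak{m}^{[p^e]}$ forces some monomial of $f^N$ to lie outside it. Therefore
\[
Nd\leq(p^e-1)(w_1+\cdots+w_n).
\]
Taking $N=\nu_e(f)$, dividing by $p^e$ and letting $e\to\infty$ gives $\fpt(f)\leq\frac{w_1+\cdots+w_n}{d}$. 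One small point needs checking: membership of a power series in the monomial ideal $\mathfrak{m}^{[p^e]}$ is decided term by term. This holds in $\kx$ because $\mathfrak{m}^{[p^e]}$ consists exactly of the series all of whose monomials are divisible by some $x_i^{p^e}$.

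\textbf{The bound $\fpt(g)\leq\fpt(f)$.} This is a degeneration argument, and I expect it to be the main step. Write $f=g+h$, where every term of $h$ has $w$-weight greater than $d$. Fix $N$ with $g^N\notin\mathfrak{m}^{[p^e]}$. Expand
\[
f^N=g^N+\sum_{j\geq1}\binom{N}{j}g^{N-j}h^j.
\]
The polynomial $g^N$ is $w$-homogeneous of weight $Nd$. Every term of the remaining sum has weight strictly greater than $Nd$. Consequently, the weight-$Nd$ part of $f^N$ is exactly $g^N$, that is, $\operatorname{in}_w(f^N)=g^N$.

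Now take a monomial of $g^N$ lying outside $\mathfrak{m}^{[p^e]}$. It has weight $Nd$. No term of the other summands has weight $Nd$, so this monomial cannot be cancelled and it appears in $f^N$ with the same nonzero coefficient. By the term-by-term criterion above, $f^N\notin\mathfrak{m}^{[p^e]}$.

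Thus $\nu_e(g)\leq\nu_e(f)$ for every $e$. Dividing by $p^e$ and taking limits gives $\fpt(g)\leq\fpt(f)$. Here $g$ is a polynomial in $\mathfrak{m}$, so $\fpt(g)$ is defined as in Definition~\ref{def:Fpt}.

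The only delicate point is the claim that the weight-$Nd$ component of $f^N$ is exactly $g^N$. This uses that all weights $w_i$ are positive, so each weight-graded piece is finite-dimensional and the grading by $w$-weight is well defined on $\kx$.
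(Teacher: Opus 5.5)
Your proof is correct and follows the paper's argument essentially step for step: a weight-$Nd$ monomial of $g^N$ outside $\mathfrak{m}^{[p^e]}$ cannot cancel in $f^N$, which gives $\nu_e(g)\leq\nu_e(f)$; the weight count $Nd\leq(p^e-1)(w_1+\cdots+w_n)$ gives the weighted bound; and $f^{p^e}\in\mathfrak{m}^{[p^e]}$ gives the bound $1$. Your remark that membership in the monomial ideal $\mathfrak{m}^{[p^e]}$ can be checked term by term makes explicit a point the paper uses without stating it.
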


\begin{proof}
Write $f=g+h$, where every monomial of $h$ has $w$-weight greater than $d$, and fix $q=p^e$. If $g^N\notin\mathfrak{m}^{[q]}$, then some monomial $\underline{x}^a$ of $g^N$ has nonzero coefficient and $a_i<q$ for every $i$. It has $w$-weight $Nd$, while every monomial of $f^N-g^N$ has $w$-weight greater than $Nd$; hence it cannot cancel in $f^N$. Thus $\nu_e(f)\geq\nu_e(g)$ for every $e$, and $\fpt(f)\geq\fpt(g)$. Conversely, every monomial of $f^N$ has $w$-weight at least $Nd$. A monomial outside $\mathfrak{m}^{[q]}$ has exponents at most $q-1$, and therefore has $w$-weight at most $(q-1)(w_1+\cdots+w_n)$. Hence $Nd>(q-1)(w_1+\cdots+w_n)$ implies $f^N\in\mathfrak{m}^{[q]}$. Dividing the resulting bound on $\nu_e(f)$ by $q$ and letting $e\to\infty$ gives $\fpt(f)\leq \frac{w_1+\cdots+w_n}{d}$. Finally, $\fpt(f)\leq1$ since $f^q\in\mathfrak{m}^{[q]}$.
\end{proof}

\begin{remark}\label{rem:weighted_equality}
\textup{In particular, if $\fpt(\operatorname{in}_w(f))=\frac{w_1+\cdots+w_n}{\ord_w(f)}\leq1$, then Proposition~\ref{prop:weighted_fpt} gives $\fpt(f)=\fpt(\operatorname{in}_w(f))$.}
\end{remark}

\begin{proof}[Proof of Part~\ref{cor:beta_bound} of Theorem~\ref{thm:semigr_fpt}]
By item 7 of Lemma~\ref{lem:semi_prope}, up to contact equivalence we may write $f=y^{\beta_0}+\sum_{l=0}^{\beta_0-1}c_l(x)y^l$, where $\ord(c_l(x))\geq\frac{(\beta_0-l)\beta_1}{\beta_0}$ and $\ord(c_0(x))=\beta_1$. Give $x$ weight $\beta_0$ and $y$ weight $\beta_1$. Then $\ord_w(f)=\beta_0\beta_1$, so Proposition~\ref{prop:weighted_fpt} gives $\fpt(f)\leq \frac{\beta_0+\beta_1}{\beta_0\beta_1}=\frac{1}{\beta_0}+\frac{1}{\beta_1}$.
\end{proof}

\begin{remark}\label{rem:igusa_lct}
\textup{Igusa in Theorem 1 of~\cite{igusa1977first} proved that if $f\in\mathbb{C}[[x,y]]$ is irreducible, then $\lct(f)=\frac{1}{\beta_0}+\frac{1}{\beta_1}$. An alternative proof was provided by Kuwata in Theorem 1.1 of~\cite{kuwata1999log}. This result was later generalized by Lee in~\cite{lee2026log} to arbitrary algebraically closed fields. Thus Part~\ref{cor:beta_bound} of Theorem~\ref{thm:semigr_fpt} can be viewed as an $F$-pure threshold analogue of Igusa's result.}
\end{remark}

\begin{proof}[Proof of Part~\ref{cor:two_generators} of Theorem~\ref{thm:semigr_fpt}]
By item 6 of Lemma~\ref{lem:semi_prope}, up to contact equivalence we may write $f=x^{\beta_0}+y^{\beta_1}+h$, where every monomial $x^iy^j$ of $h$ satisfies $\beta_0j+\beta_1i>\beta_0\beta_1$. Giving $x$ weight $\beta_1$ and $y$ weight $\beta_0$, we have $\operatorname{in}_w(f)=x^{\beta_0}+y^{\beta_1}$ and $\ord_w(f)=\beta_0\beta_1$. Proposition~\ref{prop:weighted_fpt} gives
\[
    \fpt(x^{\beta_0}+y^{\beta_1})\leq\fpt(f)
    \leq\frac1{\beta_0}+\frac1{\beta_1}.
\]
The final assertion follows when the two endpoints agree.
\end{proof}

\begin{example}\label{ex:higher_terms_change_fpt}
\textup{The equality $\fpt(f)=\fpt(x^{\beta_0}+y^{\beta_1})$ need not hold in general. Let $k=\overline{\mathbb{F}}_2$ and $f=x^5+y^7+x^3y^3$. Give $x,y$ weights $7,5$, respectively. Then $\operatorname{in}_w(f)=x^5+y^7$, which is irreducible since $\gcd(5,7)=1$. Any nontrivial factorization of $f$ would induce a nontrivial factorization of its weighted initial form, so $f$ is irreducible. By item 6 of Lemma~\ref{lem:semi_prope}, $\Gamma(f)=\langle5,7\rangle$. By Theorem 3.1 of~\cite{hernandez2015F}, we have $\fpt(x^5+y^7)=\frac{1}{4}$. Put $I_q=\langle x^q,y^q\rangle$, where $q=2^e$. If $q=4^m$, induction gives}
\[
f^{\frac{q-1}{3}}\equiv x^{q-1}y^{q-1}\pmod{I_q}.
\]
\textup{Indeed, if the congruence holds for $q$, then, in characteristic $2$,}
\[
f^{\frac{4q-1}{3}}
=f\big(f^{\frac{q-1}{3}}\big)^4
\equiv(x^5+y^7+x^3y^3)x^{4q-4}y^{4q-4}
\equiv x^{4q-1}y^{4q-1}\pmod{I_{4q}}.
\]
\textup{If instead $q=2\cdot4^m$, set $r=\frac{q}{2}=4^m$. Squaring the congruence for $r$ gives}
\[
f^{\frac{q-2}{3}}\equiv x^{q-2}y^{q-2}\pmod{I_q}.
\]
\textup{Thus, in either case, $f^{\lfloor\frac{q-1}{3}\rfloor}\notin I_q$, while multiplying the displayed surviving monomial by each term of $f$ shows that $f^{\lfloor\frac{q-1}{3}\rfloor+1}\in I_q$. Hence $\nu_e(f)=\lfloor\frac{q-1}{3}\rfloor$ for every $e$, and therefore $\fpt(f)=\frac13>\frac14=\fpt(x^5+y^7)$.}
\end{example}

\begin{remark}
\begin{enumerate}
    \item \textup{If $p\nmid\alpha\beta$, Theorem 3.1 of~\cite{hernandez2015F} computes $\fpt(x^\alpha+y^\beta)$ from the base-$p$ expansions of $\frac{1}{\alpha}$ and $\frac{1}{\beta}$. Whenever $\fpt(x^\alpha+y^\beta)=\frac{1}{\alpha}+\frac{1}{\beta}$, Remark~\ref{rem:weighted_equality} shows that every perturbation by terms of strictly larger $(\beta,\alpha)$-weight has the same $F$-pure threshold.}
    \item \textup{Arrigoni in~\cite{arrigoni2024f} studied the $F$-pure threshold of $f\in k[[x,y]]$, and more generally $c(f,\langle x^a,y^b\rangle)$, when $\Gamma(f)$ is generated by two elements, calling these ``simple algebroid plane branches.''}
\end{enumerate}
\end{remark}

\section{Log Canonical Threshold and Reduction mod $p$}

In this section we show how the previous results can be applied to the log canonical threshold and conclude Corollary~\ref{corB}. For more information on log canonical thresholds, see Chapter 9.3B in~\cite{lazarsfeld2003positivity} or~\cite{kollar2008powers,blum2016log}. The connection between contact equivalence, the Milnor number of a complex power series, and the log canonical threshold has also been studied, for example, in~\cite{almiron2022limit,cluckers2022log,mustactua2026some}.

\begin{definition}\label{def:lct}
Given a nonzero power series $f\in\mathfrak{m}\subset\mathbb{C}[[\underline{x}]]$, the \textbf{log canonical threshold} of $f$, denoted $\lct(f)$, is
\[
    \lct(f)=\inf_E\frac{A(E)}{\ord_E(f)},
\]
where $E$ runs over divisorial valuations centered at the origin and $A(E)$ denotes the log discrepancy of $E$.
\end{definition}

\begin{remark}
\textup{If $f$ is convergent, this agrees with the analytic characterization
\[
\lct(f)=\sup\left\{\lambda\geq0:\ |f|^{-\lambda}\text{ is locally }L^2
\text{ near }0\in\mathbb{C}^n\right\}.
\]
Equivalently, $\int_U |f|^{-2\lambda}\,d\mu<\infty$ for some neighborhood $U$ of the origin.}
\end{remark}

For the proof of Corollary~\ref{corB}, right determinacy allows us to replace a power series $f\in\mathbb{Z}[[\underline{x}]]$ with $\mu(f)<\infty$ by a sufficiently high polynomial truncation, without changing the invariants appearing in the statement. Thus, from now until the end of this section, let $f\in\mathbb{Z}[\underline{x}]$ be a polynomial with integer coefficients, and for every prime $p$ let $f_p\in\overline{\mathbb{F}}_p[[\underline{x}]]$ denote its reduction modulo $p$, followed by extension of scalars to $\overline{\mathbb{F}}_p$. The following proposition, presented in~\cite{hara2003generalization}, shows the relation between $\lct(f)$ and $\fpt(f_p)$. For more information on this relation, see Chapter 6.4 in~\cite{schwede2024singularities} or~\cite{mustata2012impanga}.

\begin{proposition}\label{prop:lct_fpt}
For every sufficiently large prime $p$, we have $\fpt(f_p)\leq\lct(f)$. Moreover, as $p$ ranges over the primes,
\[
    \lim_{p\to\infty}\fpt(f_p)=\lct(f).
\]
\end{proposition}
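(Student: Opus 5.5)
This is the Hara--Yoshida / Mustaţă--Takagi--Watanabe comparison between multiplier ideals and test ideals. I would prove it via log resolutions and reduction modulo $p$, not by elementary means.

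\textbf{Setup.} Fix a log resolution $\pi\colon Y\to\mathbb{A}^n_{\mathbb{C}}$ of $(\mathbb{A}^n,\operatorname{div}(f))$ near the origin, with exceptional and strict-transform prime divisors $E_i$, multiplicities $a_i=\ord_{E_i}(f)$ and log discrepancies $A(E_i)=k_i+1$. Then $\lct(f)=\min_i\frac{k_i+1}{a_i}$, the minimum taken over divisors whose image contains $0$. Since $\lct$ is a minimum over divisorial valuations, this finite minimum agrees with Definition~\ref{def:lct}. Everything (the resolution, the divisors, the relevant cohomology vanishing) is defined over a finitely generated $\mathbb{Z}$-algebra $A\subset\mathbb{C}$. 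After inverting finitely many primes, for $p\gg0$ the reduction $\pi_p$ is still a log resolution of $f_p$, with the same numerical data $a_i,k_i$.

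\textbf{Step 1: the inequality $\fpt(f_p)\leq\lct(f)$ for $p\gg0$.} I will show that the test ideal $\tau(f_p^{\lambda})$ is contained in the reduction of the multiplier ideal $\mathcal{J}(f^{\lambda})_p$. This direction is the easy one: $F$-rationality-type arguments (Smith, Hara) give $\tau\subseteq\mathcal{J}$ via the trace of Frobenius pushed through $\pi_p$. Concretely, if $f_p^{N}\notin\mathfrak{m}^{[p^e]}$, then the splitting of $\phi(F^e_*(f_p^N\cdot -))$ surjects onto $\mathcal{O}$ at the origin. Pulling back to $Y_p$ and comparing valuations along $E_i$ shows $Np^{-e}\,a_i\le k_i+1$ up to a rounding error. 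Letting $e\to\infty$ gives $\fpt(f_p)\le\frac{k_i+1}{a_i}$ for each $i$, hence $\fpt(f_p)\le\lct(f)$.

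Alternatively, I would take a direct valuative route. If $f_p^N\notin\mathfrak{m}^{[q]}$, Fedder's criterion gives $f_p^N\,g\notin\mathfrak{m}^{[q]}$ with $g=(x_1\cdots x_n)^{q-1}$-complement. A valuation $E$ with $A(E)=k+1$ satisfies $\ord_E(\mathfrak{m}^{[q]}:\dots)$ bounds via $\ord_E(\operatorname{Jac}\pi)$. This version is cleaner to write, but needs care about valuations in characteristic $p$.

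\textbf{Step 2: the limit.} Here the needed input is the hard theorem of Hara--Yoshida: for fixed $\lambda$ and $p\gg0$ (depending on $\lambda$), $\tau(f_p^{\lambda})=\mathcal{J}(f^{\lambda})_p$. Its proof uses Kawamata--Viehweg vanishing on $Y$ reduced mod $p$ together with Deligne--Illusie/Frobenius-splitting arguments, to show that the trace map $F^e_*\omega_{Y_p}(\lceil -p^e\lambda F\rceil)\to\omega_{Y_p}(\lceil-\lambda F\rceil)$ is surjective. Granting this, fix $\lambda<\lct(f)$. Then $\mathcal{J}(f^\lambda)$ is trivial at $0$, so $\tau(f_p^\lambda)$ is trivial for $p\gg0$, and hence $\fpt(f_p)\ge\lambda$. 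Combining with Step 1 yields $\lambda\le\liminf\fpt(f_p)\le\limsup\fpt(f_p)\le\lct(f)$. Letting $\lambda\uparrow\lct(f)$ gives the limit.

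\textbf{Main obstacle.} The main obstacle is Step 2, specifically the uniformity of the bound on $p$ in $\lambda$ and the vanishing in characteristic $p$. Since the claim only needs one $\lambda$ at a time, non-uniformity is harmless. For the vanishing, I would simply cite Hara--Yoshida (\cite{hara2003generalization}) rather than reprove it.
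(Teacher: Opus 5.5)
Your proposal is correct and follows the same route as the paper, which gives no argument of its own here and simply attributes the statement to Hara--Yoshida~\cite{hara2003generalization}. Your two steps are exactly the standard way this consequence is extracted from that reference (cf.~\cite{mustata2005f}): for $\fpt(f_p)\leq\lct(f)$, the characteristic-$p$ ``$F$-pure implies log canonical'' valuation estimate along the reduced resolution divisors, which does not depend on $\lambda$; and for the limit, Hara--Yoshida's inclusion $\mathcal{J}(f^{\lambda})_p\subseteq\tau(f_p^{\lambda})$ for each fixed $\lambda<\lct(f)$, followed by letting $\lambda\uparrow\lct(f)$.
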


An analogous result for the Milnor and Tjurina numbers is presented as Corollary 55 in~\cite{greuel2021semicontinuity}. Given $f\in\mathbb{Z}[[\underline{x}]]$, we denote by $\mu_p(f)$ and $\tau_p(f)$ the Milnor and Tjurina numbers of $f_p$, and by $\mu(f)$ and $\tau(f)$ the Milnor and Tjurina numbers of $f\in\mathbb{C}[[\underline{x}]]$, respectively.

\begin{proposition}\label{prop:tau_p}
Let $f\in\mathbb{Z}[[\underline{x}]]$. Then:
\begin{enumerate}
    \item If $\mu(f)<\infty$, then $\mu(f)=\mu_p(f)$ for every $p\gg0$.
    \item If $\mu_p(f)<\infty$ for some $p$, then $\mu(f)\leq\mu_p(f)$ and $\mu_q(f)\leq\mu_p(f)$ for every $q\gg0$.
\end{enumerate}
\end{proposition}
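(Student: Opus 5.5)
We prove both items by comparing the ideal $\Jac(f)$ over $\mathbb{Z}$, over $\mathbb{C}$ and over $\overline{\mathbb{F}}_p$, after first reducing to the polynomial case. If $\mu(f)<\infty$ (or $\mu_p(f)<\infty$), then right determinacy (Proposition~\ref{prop:mt_basic}, item 6) lets us replace $f$ by a truncation $f_{\le N}\in\mathbb{Z}[\underline{x}]$ without changing the relevant Milnor number. Here $N$ must be chosen uniformly enough in $p$; we choose it from $\mu(f)$, respectively $\mu_p(f)$. The key tool is that $\mu$ is a vector-space dimension, and that $\Jac(f)\supseteq\mathfrak{m}^{\mu}$ when $\mu$ is finite (item 7). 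Consequently all computations can be done in the finite-dimensional algebra $A/\mathfrak{m}^{M}$ for $M$ large, where $A$ is the polynomial ring over the relevant coefficient ring.

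\textbf{Item 1.} Suppose $\mu:=\mu(f)<\infty$ over $\mathbb{C}$, so $\mathfrak{m}^{\mu}\subseteq\Jac(f)\mathbb{C}[[\underline{x}]]$. Set $M=\mu+1$ and work in $V_K=K[\underline{x}]/\mathfrak{m}^{M}$ for a field $K$. The image of $\Jac(f)$ there is the $K$-span $W_K$ of the finitely many vectors $x^{a}\partial_i f$ with $|a|<M$. Since $\mathfrak{m}^{\mu}\subseteq\Jac(f)$, we have $\mu(f)=\dim V_K-\dim W_K$. Over $K=\mathbb{C}$ (equivalently over $\mathbb{Q}$), $W$ has some rank $r$ and contains the image of $\mathfrak{m}^{\mu}$. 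Both properties are witnessed by finitely many nonzero minors and finitely many linear relations with rational coefficients. Inverting the finitely many primes in their denominators and numerators, the same statements hold over $\mathbb{F}_p$ for $p\gg0$. Hence $\mathfrak{m}^{\mu}\subseteq\Jac(f_p)+\mathfrak{m}^{M}$, and Nakayama gives $\mathfrak{m}^{\mu}\subseteq\Jac(f_p)$. Then $\mu_p(f)=\dim V-\operatorname{rank}W_{\mathbb{F}_p}=\dim V-r=\mu(f)$. The truncation step is compatible with this: since $\mathfrak{m}^{\mu}\subseteq\Jac(f_p)$, item 7 and item 6 make the truncation harmless in characteristic $p$ as well, because the determinacy bound $2\mu-\ord+2$ is the same on both sides.

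\textbf{Item 2.} Suppose $\mu_p(f)=m<\infty$. Work in $V=\mathbb{Z}[\underline{x}]/\mathfrak{m}^{m+1}$, a free $\mathbb{Z}$-module, and let $W\subseteq V$ be the submodule spanned by the $x^a\partial_i f$. In characteristic $p$, the image of $W$ has codimension $m$ and contains $\mathfrak{m}^{m}$. Rank is lower semicontinuous under specialization: $\operatorname{rank}_{\mathbb{Q}}W\otimes\mathbb{Q}\ge\operatorname{rank}_{\mathbb{F}_p}W\otimes\mathbb{F}_p$, via a nonzero minor over $\mathbb{F}_p$ that lifts to a nonzero integer minor. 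The same minor is nonzero mod $q$ for all but finitely many $q$. It remains to pass from codimension in $V$ to $\mu$. Since $\mathfrak{m}^{m}\subseteq\Jac(f_p)$, the images of $x^a$ with $|a|=m$ lie in $W\otimes\mathbb{F}_p$. The rank argument applied to the enlarged matrix shows that $\mathfrak{m}^{m}\subseteq\Jac(f)+\mathfrak{m}^{m+1}$ over $\mathbb{Q}$ and over $\mathbb{F}_q$ for $q\gg0$: a given full-rank configuration mod $p$ remains full rank generically. Nakayama then yields $\mathfrak{m}^{m}\subseteq\Jac$ over $\mathbb{C}$ and over $\overline{\mathbb{F}}_q$. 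Finally, $\mu=\dim V-\operatorname{rank}W\le\dim V-\operatorname{rank}_{\mathbb{F}_p}=m$ in both cases.

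The main obstacle is item 2, specifically checking that the containment of $\mathfrak{m}^{m}$ transfers from characteristic $p$ to characteristic $0$ and to almost all $q$. The plan handles this by expressing the containment as the maximal rank of an explicit integer matrix, so that it is an open condition. Rank over $\mathbb{F}_p$ bounds rank over $\mathbb{Q}$ from below, and the single witnessing minor is a nonzero integer, hence nonzero modulo all but finitely many primes.
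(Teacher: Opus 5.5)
The paper gives no argument for this statement. It defers to the cited semicontinuity reference (its Corollary 55), which treats $\mu$ and $\tau$ for families of power series over a general base. Your route is a reasonable self-contained substitute: you reduce everything to ranks of explicit integer matrices on the jet space $\mathbb{Z}[\underline{x}]/\mathfrak{m}^{M}$.

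Item 1 goes through as you wrote it. There the transfer runs from $\mathbb{Q}$ to $\mathbb{F}_p$. The rational relations expressing each $x^b$, $|b|=\mu$, in terms of the $x^a\partial_i f$ reduce modulo $p\gg0$, and a nonzero $r\times r$ minor survives.

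You should drop the truncation preamble, though. As stated it is circular: right determinacy over $\mathbb{C}$ says nothing about $f_p$, and determinacy over $\overline{\mathbb{F}}_p$ says nothing about $\mu(f)$. It is also unnecessary. $\Jac(f)+\mathfrak{m}^{M}$ depends only on $f$ modulo $\mathfrak{m}^{M+1}$, so your linear algebra applies to power series directly.

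There is a genuine gap in item 2, at the step you yourself flag as the main obstacle. You transfer $\mathfrak{m}^m\subseteq\Jac+\mathfrak{m}^{m+1}$ from $\mathbb{F}_p$ to $\mathbb{Q}$ and to $\mathbb{F}_q$ by asserting it is an open, maximal-rank condition. It is not. Let $A$ be the matrix of the $x^a\partial_i f$ and $B$ the matrix of the degree-$m$ monomials. The containment says $\operatorname{rank}[A\mid B]=\operatorname{rank}A$, which is an equality between two lower semicontinuous functions, and linear relations found modulo $p$ need not lift. Already in $\mathbb{Z}^2$, take $A=(pe_1+e_2)$ and $B=(e_2)$: then $e_2\in\operatorname{span}(A)$ modulo $p$, but not over $\mathbb{Q}$ and not modulo any $q\neq p$.

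The conclusion is nevertheless true, but the reason is that $\Jac$ is an ideal, not an openness principle. What your surviving minor legitimately gives is $\dim_K K[[\underline{x}]]/(J+\mathfrak{m}^{m+1})\leq m$ for $J=\Jac(f)$, over $K=\mathbb{Q}$ and over $K=\mathbb{F}_q$ for every $q$ not dividing that minor. From this bound the containment follows:
\begin{itemize}
\item In the chain $J+\mathfrak{m}^j$, $0\leq j\leq m+1$, the $m+1$ successive quotients have total dimension at most $m$.
\item Hence $J+\mathfrak{m}^j=J+\mathfrak{m}^{j+1}$ for some $j\leq m$.
\item Nakayama then gives $\mathfrak{m}^j\subseteq J$, so $\mathfrak{m}^{m+1}\subseteq J$ and $\mu=\dim V/W\leq m$.
\end{itemize}
For the second inequality there is also a shortcut: once $\mu(f)\leq\mu_p(f)$ is known, item 1 yields $\mu_q(f)=\mu(f)\leq\mu_p(f)$ for $q\gg0$.
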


\begin{remark}\label{rem:tau_p}
\textup{An analogous result to Proposition~\ref{prop:tau_p} holds for $\tau(f)$ and $\tau_p(f)$, with the proof appearing in the same reference.}
\end{remark}

\begin{proof}[Proof of Part~\ref{cor:lct_milnor} of Corollary~\ref{corB}]
Since $\mu(f)<\infty$, we also have $\tau(f)<\infty$. Proposition~\ref{prop:tau_p} and Remark~\ref{rem:tau_p} imply that, for every $p\gg0$, $\mu_p(f)=\mu(f)$ and $\tau_p(f)=\tau(f)$. Therefore, Theorem~\ref{ThmA} gives
\[
    \lct(f)\geq\fpt(f_p)\geq
    \frac{1}{\tau_p(f)}-\frac{1}{\mu_p(f)}
    =\frac{1}{\tau(f)}-\frac{1}{\mu(f)},
\]
and the result follows.
\end{proof}

Similarly, we present an analogous result for Corollary~\ref{cor:BS_F}.

\begin{definition}\label{def:eBS}
Given $f\in\mathfrak{m}\subset\mathbb{C}[[\underline{x}]]$ such that $f\in\sqrt{\Jac(f)}$, the \textbf{Brian{\c{c}}on--Skoda exponent} of $f$, denoted $e^{\textup{BS}}(f)$, is the smallest integer $N\geq1$ such that $f^N\in\Jac(f)$.
\end{definition}

\begin{remark}
\textup{The name Brian{\c{c}}on--Skoda comes from the Brian{\c{c}}on--Skoda theorem (see~\cite{skoda1974cloture}), which gives $e^{\textup{BS}}(f)\leq n$ for every $f\in\mathfrak{m}\subset\mathbb{C}[[\underline{x}]]$, where $n$ is the number of variables. Similarly, by the Brian{\c{c}}on--Skoda theorem for power series in positive characteristic, as explained in Remark 4.5 of~\cite{hefez2019hypersurface}, if $f\in\sqrt{\Jac(f)}$, then $\nu_0(f,\Jac(f))+1\leq n$. The Brian{\c{c}}on--Skoda exponent has been studied in the context of topology and complex isolated singularities, for example, in~\cite{jung2022brianccon} and in Section 3 of~\cite{almiron2023tjurina}, which in turn is based on~\cite{varvcenko1982asymptotic}.}
\end{remark}

\begin{lemma}\label{lem:nu_E_BS}
Assume $\mu(f)<\infty$. Then, for every sufficiently large prime $p$,
\[
\nu_0(f_p,\Jac(f_p))+1=e^{\textup{BS}}(f).
\]
\end{lemma}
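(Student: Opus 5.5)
Note first that $\nu_0(f_p,\Jac(f_p))$ is the largest $N\geq0$ with $f_p^N\notin\Jac(f_p)$, so $\nu_0(f_p,\Jac(f_p))+1$ is the least $N\geq1$ with $f_p^N\in\Jac(f_p)$. It is therefore the exact positive-characteristic analogue of $e^{\textup{BS}}(f)$. The plan is to show that, for $p\gg0$ and each fixed $N$ in the finite range $1\leq N\leq n$, the membership $f^N\in\Jac(f)$ over $\mathbb{C}$ holds if and only if $f_p^N\in\Jac(f_p)$. The range is finite by the Brian\c{c}on--Skoda theorem and its positive-characteristic analogue, both recalled in the remark above. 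Since $\mu(f)<\infty$, Proposition~\ref{prop:tau_p} gives $\mu_p(f)=\mu(f)$ for $p\gg0$. Hence by Proposition~\ref{prop:mt_basic}, $\mathfrak{m}^{\mu}\subset\Jac(f)$ and $\mathfrak{m}^{\mu}\subset\Jac(f_p)$, where $\mu=\mu(f)$. So all membership questions can be decided in the finite-dimensional quotients $\mathbb{C}[\underline{x}]/\mathfrak{m}^{\mu}$ and $\overline{\mathbb{F}}_p[\underline{x}]/\mathfrak{m}^{\mu}$, and we may work with the polynomial $f\in\mathbb{Z}[\underline{x}]$.

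The key step is a linear-algebra reformulation. Let $V_K=K[\underline{x}]/\mathfrak{m}^{\mu}$, with the monomial basis, for $K=\mathbb{Q},\mathbb{C},\overline{\mathbb{F}}_p$. Since $\mathfrak{m}^\mu\subset\Jac$, the image of $\Jac$ in $V_K$ is the span $W_K$ of the finitely many elements $\underline{x}^a\partial_i f$ with $|a|<\mu$. These elements are defined over $\mathbb{Z}$, so they give a matrix $M$ with integer entries. Membership $f^N\in\Jac$ is then equivalent to the rank condition $\operatorname{rank}(M\mid v_N)=\operatorname{rank}(M)$, where $v_N$ is the integer coordinate vector of $f^N$ mod $\mathfrak{m}^\mu$.

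Ranks of integer matrices are preserved under reduction mod $p$ for all but finitely many $p$: choose nonvanishing maximal minors realizing the ranks over $\mathbb{Q}$ and exclude the primes dividing them. The rank can only drop mod $p$, and it stays the same once a nonzero minor survives. Ranks are also unchanged under the field extensions $\mathbb{Q}\subset\mathbb{C}$ and $\mathbb{F}_p\subset\overline{\mathbb{F}}_p$. Applying this to $M$ and $(M\mid v_N)$ for the finitely many $N\leq n$, we get that for $p\gg0$, $f^N\in\Jac(f)$ if and only if $f_p^N\in\Jac(f_p)$, and so the two minimal exponents agree.

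The main obstacle is justifying the reduction to a polynomial truncation and to the quotient by $\mathfrak{m}^\mu$ uniformly in $p$. It must hold simultaneously for $f$ over $\mathbb{C}$ and for $f_p$, and that requires $\mathfrak{m}^{\mu}\subset\Jac(f_p)$ with the same $\mu$ for all large $p$. This is exactly where Proposition~\ref{prop:tau_p} (equality $\mu_p=\mu$ for $p\gg0$) is used. We also need the truncation of $f$ to preserve the Jacobian ideal modulo $\mathfrak{m}^\mu$. Truncating in degree above $\mu+1$ changes $\partial_i f$ only by elements of $\mathfrak{m}^{\mu}$, which already lie in $\Jac$, so the ideals $\Jac$ themselves coincide over $\mathbb{C}$ and, for $p\gg0$, mod $p$.
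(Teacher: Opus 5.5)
Your proof is correct and is essentially the paper's argument. Both reduce the memberships $f^N\in\Jac(f)$ to finite-dimensional linear algebra defined over $\mathbb{Z}$ that persists modulo all but finitely many primes. Your nonvanishing-minor argument replaces the paper's appeal to generic freeness, and your use of $\mu_p(f)=\mu(f)$ together with $\mathfrak{m}^{\mu}\subset\Jac(f_p)$ spells out the uniform truncation level that the paper leaves implicit.

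Two small remarks:
\begin{enumerate}
    \item The Brian\c{c}on--Skoda range $N\leq n$ is unnecessary. By monotonicity it suffices, as in the paper, to check $N=e^{\textup{BS}}(f)$ and $N-1$.
    \item Your final truncation paragraph is not needed, since $f$ is already a polynomial here and your matrix depends only on $f$ modulo $\mathfrak{m}^{\mu+1}$. As written it also proves only one inclusion of Jacobian ideals: equality needs the perturbation of the partials to lie in $\mathfrak{m}^{\mu+1}=\mathfrak{m}\cdot\mathfrak{m}^{\mu}$, together with Nakayama.
\end{enumerate}
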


\begin{proof}
Set $N=e^{\textup{BS}}(f)$. Because extension of scalars from $\mathbb{Q}$ to $\mathbb{C}$ is faithfully flat, both the Milnor number and the membership conditions $f^j\in\Jac(f)$ may be computed over $\mathbb{Q}$. Since $\mu(f)<\infty$, the Jacobian algebra $A=\frac{\mathbb{Q}[[\underline{x}]]}{\Jac(f)}$ has finite length. After truncating modulo a sufficiently high power of the maximal ideal, $A$ and the class of $f$ are defined over $\mathbb{Z}[\frac{1}{c}]$ for some $c\neq0$. After enlarging $c$ and applying generic freeness, the relations $f^N=0$ and $f^{N-1}\neq0$ in $A$ remain true after reduction modulo every sufficiently large prime. Hence $f_p^N\in\Jac(f_p)$ and $f_p^{N-1}\notin\Jac(f_p)$ for every $p\gg0$, which is exactly $\nu_0(f_p,\Jac(f_p))+1=N$.
\end{proof}

\begin{proof}[Proof of Part~\ref{cor:ebs} of Corollary~\ref{corB}]
For every $p\gg0$, Proposition~\ref{prop:tau_p} gives $\mu(f_p)=\mu(f)<\infty$. Thus Corollary~\ref{cor:BS_F} and Lemma~\ref{lem:nu_E_BS} give $\fpt(f_p)\mu(f)\geq e^{\textup{BS}}(f)-1$. Since $\fpt(f_p)\leq\lct(f)$ for $p\gg0$, we obtain $\lct(f)\mu(f)+1\geq e^{\textup{BS}}(f)$.
\end{proof}



\begin{remark}
\begin{enumerate}
    \item \textup{The question of whether for every $f\in\mathbb{Z}[\underline{x}]$ there are infinitely many primes $p$ such that $\fpt(f_p)=\lct(f)$ is a fundamental conjecture in the study of $F$-pure thresholds; see Conjecture 3.6 in~\cite{mustata2005f}. It is known in some special cases (see, for example,~\cite{hernandez2016f,hernandez2011f,fedder1983F,smith1997vanishing,ein2007invariants}), but remains open in general.}
    \item \textup{Analogous results hold for a polynomial $f\in\mathbb{C}[\underline{x}]$ after spreading out its coefficients over a finitely generated $\mathbb{Z}$-algebra $A$. For every maximal ideal $\mathfrak n$ of $A$, the residue field $\frac{A}{\mathfrak n}$ is finite, hence isomorphic to $\mathbb{F}_{p^r}$ for some prime $p$ and some $r>0$. For more information, see Section 1 in Chapter 6 of~\cite{schwede2024singularities}. For isolated singularities, the determinacy theorem (Proposition~\ref{prop:mt_basic}) allows us to replace the relevant power series by contact-equivalent polynomials.}
\end{enumerate}
\end{remark}

\bibliographystyle{alpha}
\bibliography{bib}

\end{document}